\def\omg{{\Omega}}
\def\omgi{{\omg_I}}
\def\ouo{{\omg\cup\omgi}}
\def\gam{\gamma}
\newcommand{\mcK}{\mathcal{K}}
\newcommand{\mcL}{\mathcal{L}}
\newcommand{\mcA}{\mathcal{A}}
\newcommand{\mcX}{\mathcal{X}}
\newcommand{\mbR}{\mathbb{R}}
\newcommand{\mbRn}{{\mathbb{R}^n}}
\def \xb{\mathbf{x}}
\def \yb{\mathbf{y}}
\newcommand{\abs}[1]{\left|#1\right|}
\renewcommand{\vec}[1]{\mathbf{#1}}
\newcommand{\norm}[1]{\left|\!\left|#1\right|\!\right|}
\newcommand{\opnorm}[2][\gamma]{\left|\!\left|\!\left|#2\right|\!\right|\!\right|_{#1}}
\renewcommand{\tilde}[1]{\widetilde{#1}}
\newcommand{\V}{\tilde{H}_{\delta}^{s}(\Omega)}
\begin{document}

\title{A fractional model for anomalous diffusion with increased variability. Analysis, algorithms and applications to interface problems}

\author{
  Marta D'Elia\thanks{Computational Science and Analysis, Sandia National Laboratories, CA, USA, (\email{mdelia@sandia.gov})}
  \and
  Christian Glusa\thanks{Center for Computing Research, Sandia National Laboratories, NM, USA}
}

\maketitle

\begin{abstract}
  Fractional equations have become the model of choice in several applications where heterogeneities at the microstructure result in anomalous diffusive behavior at the macroscale. In this work we introduce a new fractional operator characterized by a doubly-variable fractional order and possibly truncated interactions. Under certain conditions on the model parameters and on the regularity of the fractional order we show that the corresponding Poisson problem is well-posed. We also introduce a finite element discretization and describe an efficient implementation of the finite-element matrix assembly in the case of piecewise constant fractional order. Through several numerical tests, we illustrate the improved descriptive power of this new operator across media interfaces. Furthermore, we present one-dimensional and two-dimensional $h$-convergence results that show that the variable-order model has the same convergence behavior as the constant-order model.
\end{abstract}

\begin{keywords}
  Variable-order fractional operators, anomalous diffusion, subsurface diffusion, interface problems
\end{keywords}


\section{Introduction}
Nonlocal models are becoming a popular alternative to partial differential equations (PDEs) when the latter fail to capture effects such as multiscale and anomalous behavior. In fact, several scientific and engineering applications exhibit hierarchical features that cannot be described by classical models. As an example we mention applications in continuum mechanics \cite{ha2011characteristics,littlewood2010simulation,ouchi2015fully},
phase transitions \cite{Bates1999,Burkovska2020CH,Chen_nonlocalmodels},
corrosion \cite{Rokkam2019},
turbulence \cite{Bakunin2008,Pang2020,Schekochihin2008},
and geoscience \cite{Benson2000,Meerschaert2006,Schumer2001,Schumer2003,WeissBloemenEtAl2020_FractionalOperatorsAppliedToGeophysicalElectromagnetics}.
In this work we consider nonlocal operators of fractional type, i.e. integral operators characterized by singular and non-integrable kernels, that correspond to differential operators of fractional orders, as opposed to the integer order in the PDE case. 

Note that fractional differential operators are almost as old as their integer counterpart \cite{Gorenflo1997}; however, their usability has increased during the last decades thanks to progress in computational capabilities and to a better understanding of their descriptive power. The simplest form of a fractional operator is the fractional Laplacian; in its integral form, its action on a scalar function $u$ is defined as \cite{Gorenflo1997}
\begin{equation}\label{eq:fractional-laplacian}
(-\Delta)^s u(\xb) = C_{n,s} \; {\rm p.v.}\int_\mbRn \frac{u(\xb)-u(\yb)}{|\xb-\yb|^{n+2s}} d\yb,
\end{equation}
where $s\in(0,1)$ is the {\it fractional order}, $n$ the spatial dimension, $C_{n,s}$ a constant and where p.v. indicates the principal value. It follows from \eqref{eq:fractional-laplacian} that in fractional modeling the state of a system at a point depends on the value of the state at any other point in the space; in other words, fractional models are nonlocal. Specifically, fractional operators are special instances of more general nonlocal operators \cite{DElia2020,DElia2013,Du2012,Pang2020} of the following form \cite{DElia2017}:
\begin{equation}\label{eq:nonlocal-laplacian}
-\mcL u(\xb) = \int_{B_\delta(\xb)} (u(\xb)\gam(\xb,\yb)-u(y)\gam(\yb,\xb)) d\yb.
\end{equation}
Here, interactions are limited to a Euclidean ball $B_\delta(\xb)$ of radius $\delta$, often referred to as horizon or interaction radius. The kernel $\gam(\xb,\yb)$ is a modeling choice and determines regularity properties of the solution.
Note that for $\delta=\infty$ and for the fractional-type kernel $\gam(\xb,\yb)=|\xb-\yb|^{-n-2s}$ the nonlocal operator in \eqref{eq:nonlocal-laplacian} is equivalent to the fractional Laplacian in \eqref{eq:fractional-laplacian}.
Also, it has been shown in \cite{DElia2013} that for that choice of $\delta$ and $\gam$ solutions corresponding to the nonlocal operator \eqref{eq:nonlocal-laplacian} converge to the ones corresponding to the fractional operator \eqref{eq:fractional-laplacian} as $\delta\to\infty$ (see \cite{DElia2020} for more convergence results and for a detailed classification of these operators and relationships between them).

In recent years, with the purpose of increasing the descriptive power of fractional operators, new models characterized by a variable fractional order have been introduced for both space- and time-fractional differential operators \cite{Razminia2012,AntilRautenberg2019_SobolevSpacesWithNon,Zheng2020} and several discretization methods have been designed \cite{Chen2015,Zeng2015,Zheng2020optimal-order,Zhuang2009,SchneiderReichmannEtAl2010_WaveletSolutionVariableOrderPseudodifferentialEquations}. However, the analysis of variable-order models is still in its infancy, with \cite{Felsinger2015} and \cite{Schilling2015} being perhaps the only relevant works that address theoretical questions such as well-posedness for space-fractional differential operators with variable order $s=s(\xb)$. The improved descriptive power of variable-order operators has been demonstrated in some works on parameter estimation \cite{Pang2020,Pang2019fPINNs,Pang2017discovery,WANG}; here, via machine-learning algorithms or more standard optimization techniques, the authors estimate the variable fractional power for operators of the form
\begin{equation}\label{eq:variable-truncated-Laplacian}
-\mcL u(\xb) = C_{n,s} \; {\rm p.v.}\int_{B_\delta(\xb)} \frac{u(\xb)-u(\yb)}{|\xb-\yb|^{n+2s(\xb)}} d\yb,
\end{equation}
where $\delta$ is either infinite or finite. 

Models such as the one in \eqref{eq:variable-truncated-Laplacian} are convenient to describe anomalous diffusion in case of heterogeneous materials or media, where different regions may be characterized by different diffusion rates, i.e. different values of $s$, see Figure \ref{fig:interface-domain-config} for two-dimensional illustrations where $s(\xb)$ is a piece-wise constant function defined over the domain. In other words, these models can describe physical interfaces by simply tuning the function $s(\xb)$. Modeling nonlocal interfaces is a nontrivial task due to the unknown nature of the nonlocal interactions across the interfaces. Only a few works in the literature have addressed this problem; among them, we mention \cite{Alali2015} for nonlocal models in mechanics and \cite{Capodaglio2020} for nonlocal diffusion models with integrable kernels. 
\begin{figure}[t]
\centerline{
\includegraphics[height=2.in]{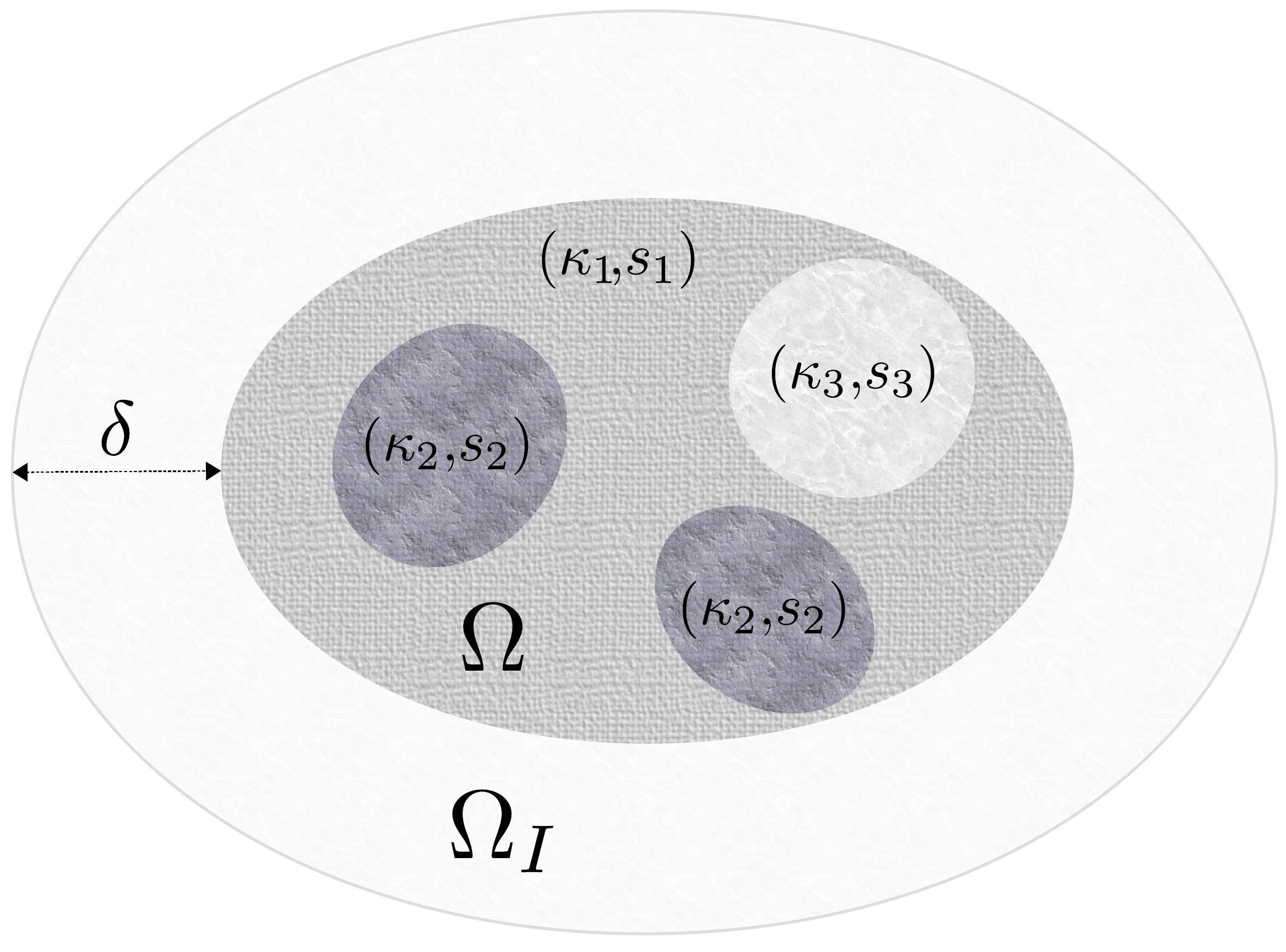}
\hspace{.3cm}
\includegraphics[height=2.in]{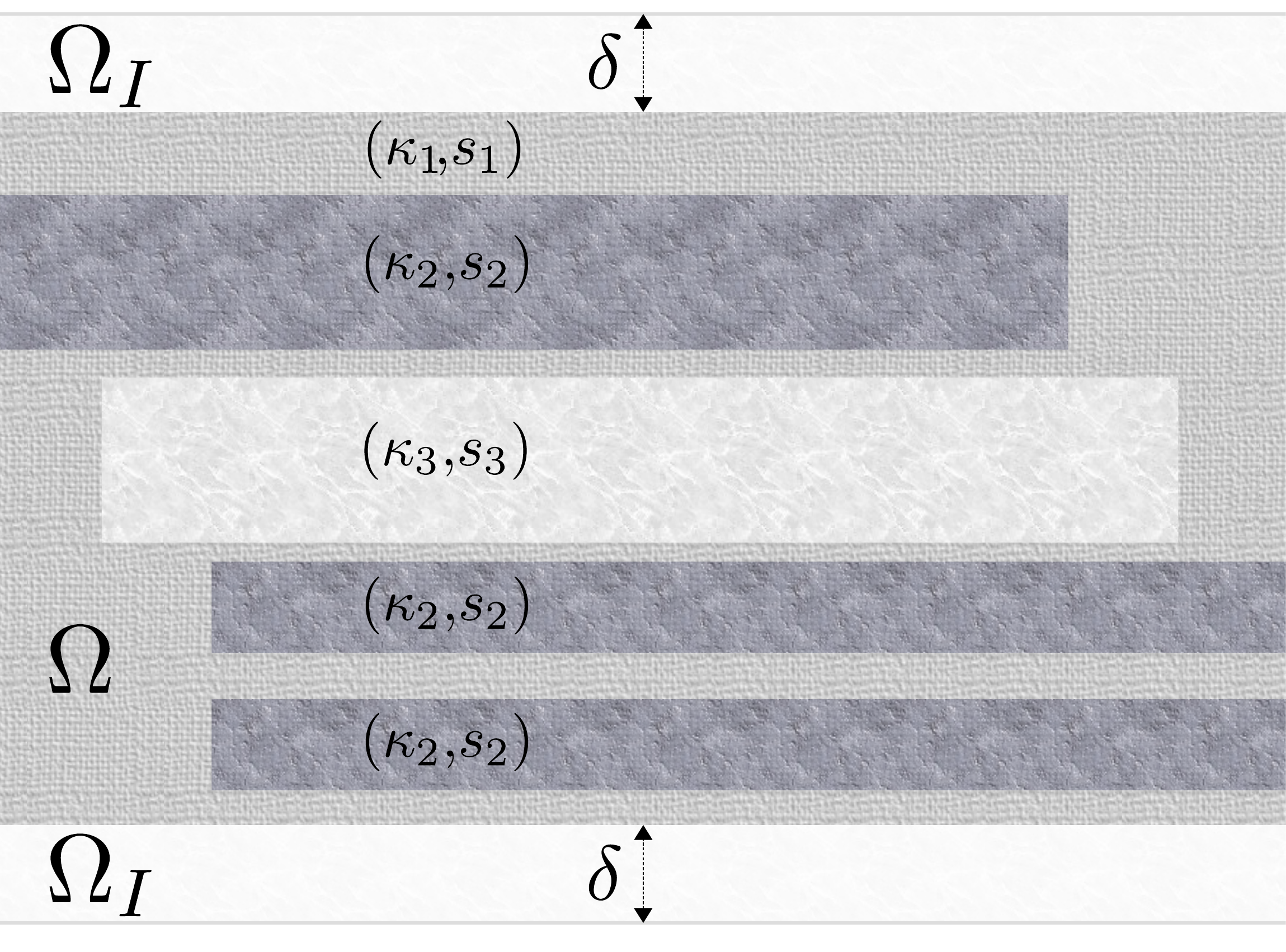}}
\caption{Interface-problem configuration.}
\label{fig:interface-domain-config}
\end{figure}
In this work, we tackle the nonlocal interface problem for fractional-type operators using a generalization of the nonlocal operator in \eqref{eq:variable-truncated-Laplacian}.
Specifically, we add variability to the fractional order and the kernel function itself. Our main contributions are:
\begin{itemize}
\item The introduction of a new variable-order fractional operator characterized by $s=s(\xb,\yb)$ and $\delta\in(0,\infty]$. This choice enables modeling of a much broader set of interface behaviors and, for symmetric $s$, symmetrizes the kernel, making analysis and implementation a much easier task. In fact, note that in \eqref{eq:variable-truncated-Laplacian} the kernel is no longer symmetric, requiring more sophisticated quadrature rules for the integration (regardless of the discretization technique of choice).
\item The analysis of well-posedness of the Poisson problem associated with the new operator in the general nonsymmetric case.
\item The design of an efficient finite-element matrix assembly technique for the practical case of piecewise constant definition of the fractional order that often appears in, e.g., subsurface applications.
\end{itemize}

\paragraph{Outline of the paper} In Section \ref{sec:notation} we first recall some state-of-the-art results; then, we introduce the new variable-order fractional operator and the corresponding strong and weak forms of the nonsymmetric diffusion problem. We also describe properties of the associated energy norm and space. In Section \ref{sec:well-posedness} we report the main result of the paper that proves via Fredholm alternative the well-posedness of the diffusion problem. In Section \ref{sec:FEM} we introduce a finite element discretization and propose efficient quadrature rules for the integration of the nonsymmetric kernel. In Section \ref{sec:numerics} we report several one- and two-dimensional numerical tests that illustrate both the improved variability of our model and the accuracy of the numerical scheme. Finally, in Section \ref{sec:conclusion} we summarize our findings.

\section{A new variable-order fractional model}\label{sec:notation}
In this section we first introduce the notation that will be used throughout the paper and recall state-of-the-art results on nonsymmetric kernels and fractional kernels with variable order. Then, we introduce a new variable-order fractional kernel and discuss the properties of the associated norm and energy space.

\subsection{Preliminaries}
The purpose of this section is two-fold. First, we recall the formulation of a symmetric nonlocal diffusion problem with finite interaction radius following the theory presented in \cite{Du2012,Du2013}; then, we recall the theory presented in \cite{Felsinger2015} on variable-order fractional operators (with infinite interaction radius). Our new model, presented in the next subsection, is a combination of the two.

Let $\omg\subset\mbRn$ be an open bounded domain. We define its {\it interaction domain} as the set of points outside $\omg$ that interact with points inside $\omg$, i.e.
\begin{equation}\label{eq:interaction-domain}
\omgi:=\{\yb\in\mbRn\setminus\omg: |\xb-\yb|\leq\delta, \;\;{\rm for} \; \xb\in\omg\},
\end{equation}
where $\delta\in(0,\infty]$, referred to as interaction radius, determines the extent of the nonlocal interactions. Also, let $\gamma(\xb,\yb):\mbRn\!\times\!\mbRn\to \mbR^+$ be a nonnegative, symmetric, kernel function, not necessarily integrable. The diffusion operator introduced in \cite{Du2013} is defined as
\begin{equation}\label{eq:symm-Laplacian}
-\mcL u(\xb) = {\rm p.v.} \int_\ouo (u(\xb)-u(\yb))\gamma(\xb,\yb)d\yb.
\end{equation}
From now on we remove the explicit reference to the principal value when the operator is used within an equation. The strong form of a truncated, symmetric, diffusion problem is then given by: for $f:\omg\to\mbR$, and $g:\omgi\to\mbR$, find $u:\ouo\to\mbR$ such that
\begin{equation}\label{eq:strong-single-general}
\begin{aligned}
\int_\ouo(u(\xb)-u(\yb))\gamma(\xb,\yb)d\yb = f(\xb) & \quad\xb\in\Omega  \\
u(\xb) = g(\xb) & \quad\xb\in\omgi.
\end{aligned}
\end{equation}
The well-posedness of problem \eqref{eq:strong-single-general} was studied in \cite{Du2012}. Extensions of this model to the nonsymmetric case have been proposed in \cite{Duadvection}, for integrable kernels, and in \cite{DElia2017} for non-integrable kernels by using the operator \eqref{eq:nonlocal-laplacian} introduced in the previous section. This operator is often referred to as nonsymmetric nonlocal diffusion operator or, more appropriately, nonlocal convection-diffusion operator as it introduces a nonlocal convection term. Since the non-symmetry of our problem is not related to convection, but only to the presence of an interface that affects the way a quantity diffuses, in this work, we do not employ \eqref{eq:nonlocal-laplacian}; instead, we use the operator defined in \eqref{eq:symm-Laplacian} with a nonsymmetric kernel $\gamma$.

Among the state-of-the-art formulations of nonlocal diffusion problems with nonsymmetric kernels, we mention the one introduced in \cite{Felsinger2015} for integrable and non-integrable kernels with infinite support. Here the authors consider the nonlocal operator \eqref{eq:symm-Laplacian} where the kernel $\gamma$ is a nonnegative measurable function, possibly nonsymmetric. Among the kernels studied in \cite{Felsinger2015}, we recall the following nonsymmetric variable-order fractional kernel, whose analysis is relevant for the theory presented in this paper:
\begin{equation}\label{eq:Felsinger-kernel}
\gamma_0(\xb,\yb)=\dfrac{\phi_0(\xb)}{|\xb-\yb|^{n+2s(\xb)}},
\end{equation}
where the non-symmetry with respect to $(\xb,\yb)$ is due to the terms $s(\xb)$ and $\phi_0(\xb)$. The latter is a normalization functions with the same role as $C_{n,s}$ in the standard fractional Laplacian in \eqref{eq:fractional-laplacian} \cite{Schilling2015}. Note that the choice of $\gamma_0$ corresponds to the variable-order version of the standard fractional Laplacian operator introduced in \eqref{eq:variable-truncated-Laplacian}. Furthermore, when $s$ and $\phi$ are constant functions, such operator is a multiple of the standard fractional Laplacian. The well-posedness of problem \eqref{eq:strong-single-general} for nonsymmetric kernels was analyzed in \cite{Felsinger2015} in a very general setting and in \cite{Schilling2015} for kernels of the form \eqref{eq:Felsinger-kernel}.

\subsection{The variable-order model}
We the purpose of increasing the descriptive power of the operator $\mcL$, we consider a fractional-type kernel with added variability. 
%
We define the new variable-order nonsymmetric fractional-type kernel as follows:
\begin{align}\label{eq:gamma}
\gamma(\xb,\yb)&= \frac{\phi(\xb,\yb)}{\abs{\xb-\yb}^{n+2s(\xb,\yb)}} \mcX_{\abs{\xb-\yb}\leq\delta}, \qquad \xb,\yb\in\ouo,
\end{align}
with constants \(\underline{s},\overline{s}\), and \(\underline{\phi},\overline{\phi}\) 
such that
\begin{align}
  0<\underline{s}&\leq s(\vec{x},\vec{y})\leq\overline{s}<1, && \forall\;\vec{x},\vec{y}\in\ouo, \label{eq:s-assumptions} \\
  0<\underline{\phi}&\leq\phi(\xb,\yb)\leq \overline{\phi}<\infty && \forall\;\vec{x},\vec{y}\in\ouo. \label{eq:phi-assumptions}
\end{align}
Thus, the resulting nonlocal operator is given by the following expression
\begin{equation}\label{eq:sexy-op}
{-\mcL u(\xb) = {\rm p.v.} \int_{(\Omega\cup\Omega_I)\cap B_\delta(\xb)}(u(\xb)-u(\yb))\dfrac{\phi(\xb,\yb)}{\abs{\xb-\yb}^{n+2s(\xb,\yb)}} d\yb.}
\end{equation}
\smallskip\begin{remark}
As opposed to the kernel model presented in \cite{Schilling2015}, the function $\phi(\xb,\yb)$ does not have a normalizing purpose but depends on the type of phenomenon that we are targeting. As an example, $\phi$ could describe a material property, such as diffusivity or permeability in the subsurface; in this context, the dependence on $\xb$ and $\yb$ describes changes in material properties across material interfaces.
\end{remark}

\smallskip\begin{remark}
For a specific choice of $\phi$, the operator defined in \eqref{eq:sexy-op} corresponds to a variable-order, tempered fractional Laplacian. Specifically, we let
\begin{equation}
\phi(\xb,\yb) = \exp\{-\lambda|\xb-\yb|\},
\end{equation}
where $\lambda>0$ is the so-called tempering parameter, whose effect is to fasten the decay of the fractional kernel. Since the overall effect of tempering is similar to the one of truncation\cite{Olson2020}, in our numerical tests we do not study the tempered case and we limit ourselves to piecewise constant definitions of $\phi$ for $\delta\in(0,\infty]$.
\end{remark}

\smallskip\begin{remark}
When $s(\xb,\yb)=s(\yb,\xb)$ the kernel $\gamma$ is  symmetric. This case is much easier to deal with both in terms of analysis and computation. In fact, in the symmetric case, the theory introduced in \cite{Du2012} can be readily applied and guarantees well-posedness of problem \eqref{eq:strong-single-general}. Furthermore, numerical integration of a symmetric function poses minor challenges compared with the nonsymmetric case, which requires the employment of more sophisticated quadrature rule. As we show in our numerical tests, a symmetric $s$ still guarantees improved model variability across interfaces, compared to the single-variable model in \eqref{eq:Felsinger-kernel}.
\end{remark}

\smallskip We define the spaces
\begin{align*}
H(\ouo;\gamma)&:=\left\{v: \ouo \rightarrow \mathbb{R}: v\in L^{2}(\ouo), \opnorm{v}<\infty \right\}, \\
\V&:=H_{\Omega}(\ouo;\gamma):=\left\{v\in H(\ouo;\gamma) : v|_{\omgi}=0 \right\},
\end{align*}
where the semi-norm $\opnorm{v}^{2}$ is defined as
\begin{align*}
\opnorm{v}^{2} := \iint_{(\ouo)^{2}} (v(\vec{x})-v(\vec{y}))^{2}\gamma_{s}(\vec{x},\vec{y})\,d\vec{x}\,d\vec{y}.
\end{align*}
Here, $\gamma_s$ is the symmetric part of the kernel. We recall that the symmetric and anti-symmetric parts of the kernel $\gamma$ are defined as follows:
\begin{align}\label{eq:gammas-gammaa}
   \gamma_{s}(\vec{x},\vec{y}) &= \frac{1}{2}\left(\dfrac{\phi(\vec{x},\vec{y})}{|\vec{x}-\vec{y}|^{n+2s(\vec{x},\vec{y})}}
+ \dfrac{\phi(\vec{y},\vec{x})}{|\vec{x}-\vec{y}|^{n+2s(\vec{y},\vec{x})}}\right)\mathcal{X}_{\abs{\vec{x}-\vec{y}}\leq\delta}, \\[3mm]
   \gamma_{a}(\vec{x},\vec{y}) &= \frac{1}{2}\left(\dfrac{\phi(\vec{x},\vec{y})}{|\vec{x}-\vec{y}|^{n+2s(\vec{x},\vec{y})}}
-  \dfrac{\phi(\vec{y},\vec{x})}{|\vec{x}-\vec{y}|^{n+2s(\vec{y},\vec{x})}}\right)\mathcal{X}_{\abs{\vec{x}-\vec{y}}\leq\delta}.
\end{align}
The following bounds hold trivially:
\begin{align}
  \frac{1}{\abs{\xb-\yb}^{d+2\underline{s}}}&\leq \frac{1}{\abs{\xb-\yb}^{d+2s(\xb,\yb)}} \leq \frac{1}{\abs{\xb-\yb}^{d+2\overline{s}}} &\text{if }\abs{\xb-\yb}\leq 1, \label{eq:kernelBoundsNear} \\
  \frac{1}{\abs{\xb-\yb}^{d+2\overline{s}}}&\leq \frac{1}{\abs{\xb-\yb}^{d+2s(\xb,\yb)}} \leq \frac{1}{\abs{\xb-\yb}^{d+2\underline{s}}} &\text{if }\abs{\xb-\yb}\geq 1, \label{eq:kernelBoundsFar}
\end{align}
In what follows, we will also frequently make use of the following lower bound on the horizon:
\begin{align}\label{eq:lowerBoundHorizon}
  \underline{\delta}:=\min\{\delta,1\}.
\end{align}
The following Lemma shows that the semi-norm above satisfies a Poincar\'e inequality and, hence, equips $\V$ with a norm.

\begin{lemma}
For all $u\in \V$, the following Poincar\'e inequality holds:
\begin{align*}
C_{p}\opnorm{u}\geq \norm{u}_{L^{2}(\ouo)}.
\end{align*}
\end{lemma}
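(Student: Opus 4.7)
The plan is to reduce the inequality to a Poincaré inequality for a classical, constant-order, finite-horizon fractional semi-norm by exploiting the uniform bounds on $\phi$ and $s$. First, since $\phi(\xb,\yb)\geq\underline{\phi}$ by \eqref{eq:phi-assumptions} and since \eqref{eq:kernelBoundsNear} applies whenever $|\xb-\yb|\leq 1$, and in particular for $|\xb-\yb|\leq\underline{\delta}$, each of the two summands in the definition of $\gamma_{s}$ is bounded below by $\underline{\phi}\,|\xb-\yb|^{-n-2\underline{s}}$ on this scale, yielding
\[
\gamma_{s}(\xb,\yb)\;\geq\;\frac{\underline{\phi}}{|\xb-\yb|^{n+2\underline{s}}}\,\mcX_{|\xb-\yb|\leq\underline{\delta}},\qquad \xb,\yb\in\ouo.
\]
Integrating against $(u(\xb)-u(\yb))^{2}$ this gives $\opnorm{u}^{2}\geq\underline{\phi}\,J(u)$, where
\[
J(u):=\iint_{(\ouo)^{2}\cap\{|\xb-\yb|\leq\underline{\delta}\}}\frac{(u(\xb)-u(\yb))^{2}}{|\xb-\yb|^{n+2\underline{s}}}\,d\xb\,d\yb,
\]
so it suffices to prove $\norm{u}_{L^{2}(\ouo)}^{2}\leq C\,J(u)$.

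Next, I would extend $u$ by zero from $\ouo$ to all of $\mbRn$. Because $u$ already vanishes on $\omgi$, and because $\omgi$ contains every point within distance $\delta\geq\underline{\delta}$ of $\omg$, any pair $(\xb,\yb)\in\mbRn^{2}$ with $|\xb-\yb|\leq\underline{\delta}$ that is not contained in $(\ouo)^{2}$ must satisfy $u(\xb)=u(\yb)=0$. Hence $J(u)$ is unchanged if the domain of integration is enlarged to $\mbRn\times\mbRn$, and the zero-extension of $u$ is a compactly supported element of the classical fractional space $\widetilde{H}^{\underline{s}}(\oomg)$.

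The remaining step is a Poincaré inequality for the truncated, constant-order semi-norm $J$ on functions with compact support in $\oomg$, namely $\norm{u}_{L^{2}(\mbRn)}^{2}\leq C(\omg,n,\underline{s},\underline{\delta})\,J(u)$. This is a standard result for finite-horizon nonlocal diffusion (see \cite{Du2012,Du2013}) and, combined with the previous two steps, produces $\norm{u}_{L^{2}(\ouo)}\leq C_{p}\opnorm{u}$ with $C_{p}^{2}=C/\underline{\phi}$, as claimed.

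The only nontrivial ingredient is this truncated fractional Poincaré inequality. When $\underline{\delta}\geq\mathrm{diam}(\oomg)$, $J(u)$ coincides with the full Gagliardo semi-norm on $\mbRn$ of the zero-extension of $u$ and one simply invokes the standard Poincaré inequality in $\widetilde{H}^{\underline{s}}(\omg)$. For smaller $\underline{\delta}$, I would either use a chain argument through overlapping balls of radius $\underline{\delta}/2$ that propagates the vanishing of $u$ on $\omgi$ into the interior of $\omg$, or proceed by contradiction and compactness, using the elementary far-field bound $\iint_{|\xb-\yb|>\underline{\delta}}(u(\xb)-u(\yb))^{2}|\xb-\yb|^{-n-2\underline{s}}\,d\xb\,d\yb\lesssim\norm{u}_{L^{2}(\mbRn)}^{2}$ to establish equivalence of $J(u)^{1/2}+\norm{u}_{L^{2}}$ with the full $\widetilde{H}^{\underline{s}}$ norm, hence the compact embedding into $L^{2}$ that closes the contradiction argument.
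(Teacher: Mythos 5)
Your proof is correct and takes essentially the same route as the paper: bound $\gamma_{s}$ from below by the constant-order truncated kernel $\underline{\phi}\,\abs{\xb-\yb}^{-n-2\underline{s}}\mcX_{\abs{\xb-\yb}\leq\underline{\delta}}$ and invoke the nonlocal Poincar\'e inequality for that kernel, for which the paper cites \cite[Lemma 4.3]{Du2012}. One small slip in your tangential sketch of that last ingredient: even when $\underline{\delta}\geq\mathrm{diam}(\oomg)$, the quantity $J(u)$ does \emph{not} coincide with the full Gagliardo semi-norm of the zero-extension, since the latter still receives contributions from pairs with $\abs{\xb-\yb}>\underline{\delta}$ where one point lies in $\Omega$ and the other far away (these produce a term comparable to $\norm{u}_{L^{2}(\Omega)}^{2}$), so that reduction is not quite as immediate as stated---but this does not affect the validity of your main argument, which correctly rests on the cited result rather than the sketch.
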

\begin{proof}
Assumptions \eqref{eq:s-assumptions} and inequality \eqref{eq:kernelBoundsNear} imply
\begin{align*}
\opnorm{v}^{2}
&= \iint_{(\ouo)^{2},\abs{\vec{x}-\vec{y}}\leq \underline{\delta}}
   (v(\vec{x})-v(\vec{y}))^{2}\gamma_{s}(\vec{x},\vec{y})\,d\vec{x}\,d\vec{y}\\
&+ \iint_{(\ouo)^{2},\abs{\vec{x}-\vec{y}}\geq \underline{\delta}}
   (v(\vec{x})-v(\vec{y}))^{2}\gamma_{s}(\vec{x},\vec{y})\,d\vec{x}\,d\vec{y} \\
&\geq C\iint_{(\ouo)^{2},\abs{\vec{x}-\vec{y}}\leq\underline{\delta}}
   (v(\vec{x})-v(\vec{y}))^{2}\frac{1}{\abs{\vec{x}-\vec{y}}^{n+2\underline{s}}}\,d\vec{x}\,d\vec{y}.
\end{align*}
As a consequence, $\opnorm{\cdot}$ is lower bounded by the semi-norm of a $\underline{\delta}$-truncated fractional Sobolev space with fractional order $\underline s$, for which the Poincar\'e inequality holds, see \cite[Lemma 4.3]{Du2012}.
\end{proof}

\smallskip\noindent\textbf{The weak form.}
Let $g=0$ for simplicity; then, we can write the weak form of problem \eqref{eq:strong-single-general} as
\begin{equation}\label{eq:weak-form}
\int_\omg -\mcL u \,v \,d\xb = 
{\iint_{(\ouo)^2} (u(\xb)-u(\yb))\gamma(\xb,\yb)v(\xb)\,d\yb\,d\xb}
= \int_\omg f\,v\,d\xb,
\end{equation}
We denote the bilinear form in \eqref{eq:weak-form} by
\begin{equation}\label{eq:A}
{\mcA(u,v)= \iint_{(\ouo)^2} \gamma(\xb,\yb)v(\xb)(u(\xb)-u(\yb)\,d\yb\,d\xb.}
\end{equation}
While this form is formally equivalent to the one introduced in \cite{Felsinger2015}, the kernel \eqref{eq:gamma} does not belong to class of kernels studied in that paper. In the next section we show how to extend the well-posedness theory developed in \cite{Felsinger2015} and \cite{Schilling2015} to our new operator.

For discretization purposes, it is convenient to further rewrite \(\mcA\) as
\begin{equation}\label{eq:A2}
    \mcA(u,v)= \frac{1}{2}\iint_{(\ouo)^2} (u(\xb)-u(\yb)) \left(\gamma(\xb,\yb)v(\xb) - \gamma(\yb,\xb)v(\yb)\right) \,d\yb\,d\xb.
\end{equation}
This form allows for simpler numerical integration and is used in our numerical implementation.

\section{Well-posedness analysis}\label{sec:well-posedness}
In this section we prove the well-posedness of problem \eqref{eq:weak-form} by adapting the theory developed in the works of Felsinger, Kassmann and Voigt~\cite{Felsinger2015} and of Schilling and Wang~\cite{Schilling2015}. We first report the building blocks of our main well-posedness result.

The following proposition establishes a relation between the symmetric and anti-symmetric part of the kernel and is a generalization of Proposition 3.1 in the paper by Schilling and Wang~\cite{Schilling2015}.
\begin{proposition}\label{prop:Agamma}
Assume that
\begin{align}\label{eq:beta-property}
  \beta(r):=\sup_{\abs{\vec{x}-\vec{y}}\leq r}\abs{s(\vec{x},\vec{y})-s(\vec{y},\vec{x})}
\end{align}
satisfies
\begin{align}\label{as:integrability}
  \int_{0}^{1}dr~\frac{\left(\beta(r)\abs{\log r}\right)^{2}}{r^{1+2\overline{s}}} <\infty.
\end{align}
Moreover, assume that
\begin{align*}
  \alpha(r):=\sup_{\abs{\vec{x}-\vec{y}}\leq r}\abs{\phi(\vec{x},\vec{y})-\phi(\vec{y},\vec{x})}
\end{align*}
is \((\overline{s}+\varepsilon)\)-H\"older continuous for \(\varepsilon>0\) arbitrarily small.
Then there exists a constant \(A_{\gamma}\) such that
\begin{align}\label{eq:4}
  \sup_{\vec{x}\in\ouo}\int_{\{\gamma_{s}(\xb,\yb)\neq0\}} d\vec{y}~\frac{\gamma_{a}(\vec{x},\vec{y})^{2}}{\gamma_{s}(\vec{x},\vec{y})} \leq A_{\gamma}< \infty.
\end{align}
\end{proposition}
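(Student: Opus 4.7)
The plan is to rewrite $\gamma_{a}^{2}/\gamma_{s}$ as an elementary algebraic expression in the two summands that build $\gamma_{s}$, and then estimate it pointwise using the regularity hypotheses on $s$ and $\phi$. Set $a=\phi(\xb,\yb)|\xb-\yb|^{-n-2s(\xb,\yb)}$ and $b=\phi(\yb,\xb)|\xb-\yb|^{-n-2s(\yb,\xb)}$, so that $\gamma_{s}=(a+b)/2$, $\gamma_{a}=(a-b)/2$, and hence $\gamma_{a}^{2}/\gamma_{s}=(a-b)^{2}/[2(a+b)]$ on the support $\{|\xb-\yb|\leq\delta\}$. Write $r=|\xb-\yb|$, denote by $\phi_{i},s_{i}$ the associated values of $\phi$ and $s$, and set $\overline{s}_{12}=(s_{1}+s_{2})/2$.

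Two ingredients drive the pointwise estimate. First, the AM-GM inequality yields the lower bound $a+b\geq 2\sqrt{ab}\geq 2\underline{\phi}\,r^{-n-2\overline{s}_{12}}$. Second, the telescoping decomposition
\[
a-b = (\phi_{1}-\phi_{2})\,r^{-n-2s_{2}} + \phi_{1}\bigl(r^{-n-2s_{1}}-r^{-n-2s_{2}}\bigr),
\]
combined with $|\phi_{1}-\phi_{2}|\leq\alpha(r)$ and the mean value theorem applied to $s\mapsto r^{-2s}=e^{-2s\log r}$, produces the upper bound
\[
|a-b|\leq \alpha(r)\,r^{-n-2s_{2}} + 2\overline{\phi}\,|\log r|\,\beta(r)\,r^{-n-2\tilde{s}},
\]
for some $\tilde{s}$ between $s_{1}$ and $s_{2}$. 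Dividing the square of this upper bound by the AM-GM lower bound and factoring out $r^{-n-2\overline{s}_{12}}$ leaves residual factors of the form $r^{c(\overline{s}_{12}-s_{2})}$ and $r^{c(\overline{s}_{12}-\tilde{s})}$, whose absolute exponents are bounded by a constant multiple of $|s_{1}-s_{2}|\leq\beta(r)$. Such factors are controlled by $e^{C\beta(r)|\log r|}$, which assumption \eqref{as:integrability} forces to be uniformly bounded on $r\leq\underline{\delta}$: if $\beta(r)|\log r|$ did not vanish as $r\to 0$, the integrand of \eqref{as:integrability} would be bounded below by a positive multiple of $r^{-1-2\overline{s}}$ on a set of positive measure, contradicting the assumption. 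Using also $r^{-n-2\overline{s}_{12}}\leq r^{-n-2\overline{s}}$ for $r\leq 1$, we arrive at the pointwise estimate
\[
\frac{\gamma_{a}(\xb,\yb)^{2}}{\gamma_{s}(\xb,\yb)} \leq C\,r^{-n-2\overline{s}}\bigl(\alpha(r)^{2}+\beta(r)^{2}|\log r|^{2}\bigr), \qquad r\leq\underline{\delta}.
\]

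Finally, integrating in polar coordinates centered at $\xb$, the near-diagonal contribution reduces to $C\int_{0}^{\underline{\delta}} r^{-1-2\overline{s}}\bigl(\alpha(r)^{2}+\beta(r)^{2}|\log r|^{2}\bigr)\,dr$: the $\alpha^{2}$ piece converges because the H\"older hypothesis yields $\alpha(r)^{2}\leq Cr^{2\overline{s}+2\varepsilon}$, leaving an integrable $r^{-1+2\varepsilon}$ tail, while the $\beta^{2}|\log r|^{2}$ piece is exactly controlled by \eqref{as:integrability}. On the far region $\underline{\delta}\leq r\leq\delta$ (nonempty only when $\delta>1$), the integrand is bounded away from the singularity and $\ouo$ has finite measure, so the contribution is uniformly bounded in $\xb$. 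Taking the supremum over $\xb\in\ouo$ then produces the finite constant $A_{\gamma}$. The main obstacle is the exponent bookkeeping in the central step: one must identify $r^{-n-2\overline{s}_{12}}$ as the correct benchmark scale, show that the leftover factor $e^{C\beta(r)|\log r|}$ is tame thanks to \eqref{as:integrability}, and finally dominate the remaining scale by $r^{-n-2\overline{s}}$ so that assumption \eqref{as:integrability} applies verbatim to the resulting one-dimensional integral.
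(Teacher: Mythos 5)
Your proposal follows the same overall architecture as the paper's proof: split the integration domain at $\underline\delta$, decompose the anti-symmetric part into a $\phi$-difference term and an $s$-difference term (your telescoping of $a-b$ is exactly the paper's $\gamma_{a,1}+\gamma_{a,2}$), bound $\alpha$ via the H\"older hypothesis, bound the $s$-term via an integral/MVT representation, and finish in polar coordinates. The genuinely different step is how you lower-bound the denominator. The paper chooses, for each piece $\gamma_{a,i}$, the single summand of $\gamma_s$ whose exponent exactly matches the numerator (e.g.\ $\gamma_s \geq \tfrac{1}{2}\underline\phi\,r^{-n-2s(\xb,\yb)}$ when dividing $\gamma_{a,1}^2$), so the powers of $r$ cancel cleanly and no residual factor appears. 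You instead invoke AM--GM, $a+b\geq2\sqrt{ab}$, which lands you at the geometric-mean exponent $\overline{s}_{12}=(s_1+s_2)/2$ and forces you to control leftover factors $r^{c(\overline{s}_{12}-s_2)}$, $r^{c(\overline{s}_{12}-\tilde s)}$ of size $e^{C\beta(r)\abs{\log r}}$. That works, but it adds a lemma (``$\beta(r)\abs{\log r}$ is bounded near $0$'') that the paper's one-sided bound renders unnecessary. Your argument for that extra lemma is a bit loose as stated: ``bounded below by a positive multiple of $r^{-1-2\overline{s}}$ on a set of positive measure'' does not by itself contradict integrability unless that set accumulates at $0$; the rigorous version uses the monotonicity of $\beta$ to transfer the lower bound from a sequence $r_n\to0$ onto intervals $[r_n,2r_n]$, and then absolute continuity of the integral in \eqref{as:integrability} to reach the contradiction. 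Worth tightening, but not a conceptual hole.

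One actual gap in the far region: you justify the bound on $\{\abs{\xb-\yb}\geq\underline\delta\}$ by ``$\ouo$ has finite measure.'' That is false precisely in the case the paper cares most about, $\delta=\infty$, where $\omgi=\mbRn\setminus\omg$ and $\ouo=\mbRn$. The correct argument there — and the one the paper uses — is that $\gamma_a^2/\gamma_s\leq\gamma_s\leq\overline\phi\,\abs{\xb-\yb}^{-n-2\underline s}$ for $\abs{\xb-\yb}\geq1$ by \eqref{eq:kernelBoundsFar}, so that $\int_{\abs{\xb-\yb}\geq1}\gamma_s\,d\yb\lesssim\int_1^\infty r^{-1-2\underline s}\,dr<\infty$ regardless of the measure of $\ouo$. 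With that repair, and with the tightened boundedness argument above, the proof is sound; you might still consider adopting the paper's single-term lower bound on $\gamma_s$, which eliminates the exponent bookkeeping entirely.
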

\begin{proof}
First, by definition of symmetric and anti-symmetric parts of the kernel in \eqref{eq:gammas-gammaa}, we have that \(\abs{\gamma_{a}(\vec{x},\vec{y})}\leq \gamma_{s}(\vec{x},\vec{y})\).
This and assumptions \eqref{eq:s-assumptions}, \eqref{eq:phi-assumptions} and inequality \eqref{eq:kernelBoundsFar} imply that there exists a positive constant \(C_{1}\) such that
\begin{align*}
    \int_{\{\gamma_{s}(\xb,\yb)\neq0\},\abs{\vec{x}-\vec{y}}\geq 1} d\vec{y}~\frac{\gamma_{a}(\vec{x},\vec{y})^{2}}{\gamma_{s}(\vec{x},\vec{y})}
    \leq \int_{\abs{\vec{x}-\vec{y}}\geq 1} d\vec{y}~\gamma_{s}(\vec{x},\vec{y})
    \leq C \int_{1}^{\infty} r^{-1-2\underline{s}}
    =: C_{1}.
\end{align*}
Moreover, due to the definition of \(\underline{\delta}\),
\begin{align*}
  \int_{\{\gamma_{s}(\xb,\yb)\neq0\},\abs{\vec{x}-\vec{y}}\in [\underline{\delta},1]} d\vec{y}~\frac{\gamma_{a}(\vec{x},\vec{y})^{2}}{\gamma_{s}(\vec{x},\vec{y})} =0
\end{align*}
so that we obtain
\begin{align}\label{eq:partI}
  \int_{\{\gamma_{s}(\xb,\yb)\neq0\},\abs{\vec{x}-\vec{y}}\geq \underline{\delta}} d\vec{y}~\frac{\gamma_{a}(\vec{x},\vec{y})^{2}}{\gamma_{s}(\vec{x},\vec{y})} \leq C_{1}
\end{align}
Next, for \(\abs{\vec{x}-\vec{y}}\leq \underline{\delta}\), we write:
\begin{align*}
    \gamma_{a}(\vec{x},\vec{y})
    &= \frac{1}{2}\left(\phi(\vec{x},\vec{y})-\phi(\vec{y},\vec{x})\right)\abs{\vec{x}-\vec{y}}^{-n-2s(\vec{x},\vec{y})} \\
    &\quad+ \frac{1}{2}\phi(\vec{y},\vec{x})\abs{\vec{x}-\vec{y}}^{-n}\left(\abs{\vec{x}-\vec{y}}^{-2s(\vec{x},\vec{y})}-\abs{\vec{x}-\vec{y}}^{-2s(\vec{y},\vec{x})}\right). \\
    &=: \gamma_{a,1}(\vec{x},\vec{y}) + \gamma_{a,2}(\vec{x},\vec{y})
\end{align*}
Then, by \eqref{eq:phi-assumptions} and the H\"older continuity of \(\alpha\) we have that there exists a positive constant \(C_{2}\) such that
\begin{align}
    &\int_{\{\gamma_{s}(\xb,\yb)\neq0\},\abs{\vec{x}-\vec{y}}\leq \underline{\delta}} d\vec{y}~\frac{\gamma_{a,1}(\vec{x},\vec{y})^{2}}{\gamma_{s}(\vec{x},\vec{y})} \nonumber \\
    &=\frac{1}{2}\int_{\{\gamma_{s}(\xb,\yb)\neq0\},\abs{\vec{x}-\vec{y}}\leq \underline{\delta}} d\vec{y}~\frac{\left(\phi(\vec{x},\vec{y})-\phi(\vec{y},\vec{x})\right)^{2}\abs{\vec{x}-\vec{y}}^{-2n-4s(\vec{x},\vec{y})}}{\phi(\vec{x},\vec{y})\abs{\vec{x}-\vec{y}}^{-n-2s(\vec{x},\vec{y})}+\phi(\vec{y},\vec{x})\abs{\vec{x}-\vec{y}}^{-n-2s(\vec{y},\vec{x})}} \nonumber \\
    &\leq C\int_{\{\gamma_{s}(\xb,\yb)\neq0\},\abs{\vec{x}-\vec{y}}\leq \underline{\delta}} d\vec{y}~\frac{\left(\phi(\vec{x},\vec{y})-\phi(\vec{y},\vec{x})\right)^{2}}{\abs{\vec{x}-\vec{y}}^{n+2s(\vec{x},\vec{y})}} \nonumber \\
    &\leq C\int_{\{\gamma_{s}(\xb,\yb)\neq0\},\abs{\vec{x}-\vec{y}}\leq \underline{\delta}} d\vec{y}~ \frac{\alpha(\abs{\vec{x}-\vec{y}})^{2}}{\abs{\vec{x}-\vec{y}}^{n+2\overline{s}}} \nonumber \\
    &\leq C\int_{0}^{\underline{\delta}} dr~ \frac{r^{2\overline{s}+2\varepsilon}}{r^{1+2\overline{s}}} =: C_{2}. \label{eq:partII}
\end{align}
Here, we have used \eqref{eq:kernelBoundsNear}.
On the other hand, since for \(a,b>0\)
\begin{align*}
    \abs{\vec{x}-\vec{y}}^{-a}-\abs{\vec{x}-\vec{y}}^{-b} &= -\int_{a}^{b}du~\abs{\vec{x}-\vec{y}}^{-u}\log\abs{\vec{x}-\vec{y}},
\end{align*}
then, for \(\abs{\vec{x}-\vec{y}}\leq\underline{\delta}\leq1\), we have that
\begin{align*}
    \left(\abs{\vec{x}-\vec{y}}^{-2s(\vec{x},\vec{y})}-\abs{\vec{x}-\vec{y}}^{-2s(\vec{y},\vec{x})}\right)^{2}
    &\leq \left(\log\abs{\vec{x}-\vec{y}}\right)^{2}\left(s(\vec{x},\vec{y})-s(\vec{y},\vec{x})\right)^{2}\abs{\vec{x}-\vec{y}}^{-4\min\{s(\vec{x},\vec{y}),s(\vec{y},\vec{x})\}}.
\end{align*}
Hence, by \eqref{eq:phi-assumptions} and assumption \eqref{as:integrability}, there exists a positive constant \(C_{3}\) such that
\begin{align}
    &\int_{\{\gamma_{s}(\xb,\yb)\neq0\},\abs{\vec{x}-\vec{y}}\leq \underline{\delta}} d\vec{y}~\frac{\gamma_{a,2}(\vec{x},\vec{y})^{2}}{\gamma_{s}(\vec{x},\vec{y})} \nonumber \\
    =&\frac{1}{2}\int_{\{\gamma_{s}(\xb,\yb)\neq0\},\abs{\vec{x}-\vec{y}}\leq \underline{\delta}} d\vec{y}~\frac{\phi(\vec{y},\vec{x})^{2}\abs{\vec{x}-\vec{y}}^{-2n}\left(\abs{\vec{x}-\vec{y}}^{-2s(\vec{x},\vec{y})}-\abs{\vec{x}-\vec{y}}^{-2s(\vec{y},\vec{x})}\right)^{2}}{\phi(\vec{x},\vec{y})\abs{\vec{x}-\vec{y}}^{-n-2s(\vec{x},\vec{y})}+\phi(\vec{y},\vec{x})\abs{\vec{x}-\vec{y}}^{-n-2s(\vec{y},\vec{x})}} \nonumber \\
    \leq& C \int_{\{\gamma_{s}(\xb,\yb)\neq0\},\abs{\vec{x}-\vec{y}}\leq \underline{\delta}} d\vec{y}~\frac{\left(\log\abs{\vec{x}-\vec{y}}\right)^{2}\left(s(\vec{x},\vec{y})-s(\vec{y},\vec{x})\right)^{2}}{\abs{\vec{x}-\vec{y}}^{2n+4\min\{s(\vec{x},\vec{y}),s(\vec{y},\vec{x})\}}}
           \frac{1}{\phi(\vec{x},\vec{y})\abs{\vec{x}-\vec{y}}^{-n-2s(\vec{x},\vec{y})}+\phi(\vec{y},\vec{x})\abs{\vec{x}-\vec{y}}^{-n-2s(\vec{y},\vec{x})}} \nonumber \\
    \leq& C \int_{\{\gamma_{s}(\xb,\yb)\neq0\},\abs{\vec{x}-\vec{y}}\leq \underline{\delta}} d\vec{y}~ \frac{\left(\log\abs{\vec{x}-\vec{y}}\right)^{2}\left(s(\vec{x},\vec{y})-s(\vec{y},\vec{x})\right)^{2}}{\abs{\vec{x}-\vec{y}}^{n+2\overline{s}}} \nonumber \\
    \leq& C \int_{0}^{\underline{\delta}} dr~ \frac{\left(\beta(r)\abs{\log r}\right)^{2}}{r^{1+2\overline{s}}} =: C_{3}. \label{eq:partIII}
\end{align}
Here, we have again used \eqref{eq:kernelBoundsNear}.
Hence, from \eqref{eq:partI}, \eqref{eq:partII} and \eqref{eq:partIII}, we obtain:
\begin{align*}
    \sup_{\vec{x}\in\ouo}\int_{\ouo} d\vec{y}~\frac{\gamma_{a}(\vec{x},\vec{y})^{2}}{\gamma_{s}(\vec{x},\vec{y})}
    &\leq C_{1}+2(C_{2}+C_{3}) =:A_{\gamma}.
\end{align*}
and the result follows.
\end{proof}

The next G{\aa}rding inequality follows directly from \cite[Lemma 3.1]{Felsinger2015}; we report its proof for completeness.
\begin{lemma}[G{\aa}rding inequality]
There exist constants \(C,c>0\) such that
\begin{align}\label{eq:Garding}
\mathcal{A}(u,u) \geq C\opnorm{u}^{2} -c\norm{u}_{L^{2}(\ouo)}^{2} \qquad \forall \;u\in \V.
\end{align}
\end{lemma}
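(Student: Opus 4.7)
The plan is to split the kernel into its symmetric and anti-symmetric parts, $\gamma = \gamma_s + \gamma_a$, so that $\mcA(u,u) = \mcA_s(u,u) + \mcA_a(u,u)$, and handle each term separately. The symmetric part will contribute the energy semi-norm, while the anti-symmetric part will be controlled using Proposition~\ref{prop:Agamma} together with a Young-type inequality.

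First, for the symmetric part, I would apply the standard symmetrization trick: interchanging $\xb$ and $\yb$ under the integral and averaging gives
\begin{align*}
  \iint_{(\ouo)^2} \gamma_s(\xb,\yb)\, u(\xb)(u(\xb)-u(\yb))\,d\xb\,d\yb
  = \tfrac{1}{2}\iint_{(\ouo)^2} \gamma_s(\xb,\yb)(u(\xb)-u(\yb))^2\,d\xb\,d\yb
  = \tfrac{1}{2}\opnorm{u}^2.
\end{align*}
For the anti-symmetric part, the same swap combined with the relation $\gamma_a(\xb,\yb) = -\gamma_a(\yb,\xb)$ yields the identity
\begin{align*}
  \iint_{(\ouo)^2} \gamma_a(\xb,\yb)\, u(\xb)(u(\xb)-u(\yb))\,d\xb\,d\yb
  = \tfrac{1}{2}\iint_{(\ouo)^2} \gamma_a(\xb,\yb)(u(\xb)-u(\yb))(u(\xb)+u(\yb))\,d\xb\,d\yb.
\end{align*}

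Next, I would rewrite the integrand on the right-hand side as $\sqrt{\gamma_s}(u(\xb)-u(\yb))\cdot(\gamma_a/\sqrt{\gamma_s})(u(\xb)+u(\yb))$, valid wherever $\gamma_s\neq 0$ (the set $\{\gamma_s=0\}$ contributes nothing since $|\gamma_a|\leq \gamma_s$ pointwise). Applying Cauchy--Schwarz and then Young's inequality with parameter $\varepsilon>0$ gives
\begin{align*}
  \left|\iint \gamma_a (u(\xb)-u(\yb))(u(\xb)+u(\yb))\right|
  \leq \varepsilon\iint \gamma_s (u(\xb)-u(\yb))^2 + \frac{1}{\varepsilon}\iint \frac{\gamma_a^2}{\gamma_s}(u(\xb)+u(\yb))^2.
\end{align*}
The first summand is $\varepsilon\opnorm{u}^2$. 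For the second, I use $(u(\xb)+u(\yb))^2\leq 2(u(\xb)^2+u(\yb)^2)$, Fubini, and the bound from Proposition~\ref{prop:Agamma} in the form $\int \gamma_a^2/\gamma_s\,d\yb \leq A_\gamma$, to obtain a bound by $4 A_\gamma \norm{u}_{L^2(\ouo)}^2$.

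Combining the three pieces yields
\begin{align*}
  \mcA(u,u) \geq \tfrac{1}{2}\opnorm{u}^2 - \tfrac{\varepsilon}{2}\opnorm{u}^2 - \tfrac{2A_\gamma}{\varepsilon}\norm{u}_{L^2(\ouo)}^2,
\end{align*}
and choosing $\varepsilon<1$ (e.g., $\varepsilon=\tfrac{1}{2}$) gives the G{\aa}rding inequality with $C=\tfrac{1}{4}$ and $c=4A_\gamma$. There is no real obstacle here once Proposition~\ref{prop:Agamma} is in hand; the only subtle point is the identity that converts the anti-symmetric contribution into the product $(u(\xb)-u(\yb))(u(\xb)+u(\yb))$, which is what makes the Cauchy--Schwarz splitting between the semi-norm and the $L^2$ mass possible.
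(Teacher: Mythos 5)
Your proof is correct and takes essentially the same route as the paper: decompose $\gamma=\gamma_s+\gamma_a$, extract $\tfrac{1}{2}\opnorm{u}^2$ from the symmetric part, and control the anti-symmetric part via Young's inequality after inserting the weight $\gamma_s^{1/2}\gamma_s^{-1/2}$, with Proposition~\ref{prop:Agamma} supplying the constant $A_\gamma$. The only minor cosmetic difference is that you first symmetrize the anti-symmetric integral to replace $2u(\xb)$ by $u(\xb)+u(\yb)$ before applying Young's, whereas the paper applies Young's directly to $\gamma_a\,u(\xb)(u(\xb)-u(\yb))$; both are valid and yield the same structure with comparable constants.
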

\begin{proof}
By using Young's inequality for \(\varepsilon>0\), we obtain
  \begin{align*}
    \mathcal{A}(u,u)
    &= \iint\limits_{\left(\ouo\right)^{2}} \gamma(\vec{x},\vec{y})u(\vec{x})(u(\vec{x})-u(\vec{y}))\,d\vec{y}\,d\vec{x} \\
    & \geq \frac{1}{2}\iint\limits_{\left(\ouo\right)^{2}} \gamma_{s}(\vec{x},\vec{y})\abs{u(\vec{x})-u(\vec{y})}^{2}\,d\vec{y}\,d\vec{x}
    - \iint\limits_{\left(\ouo\right)^{2}}  \abs{\gamma_{a}(\vec{x},\vec{y})u(\vec{x})(u(\vec{x})-u(\vec{y}))}\,d\vec{y}\,d\vec{x} \\
    &= \frac{1}{2} \opnorm{u}^{2} - \iint\limits_{\left(\ouo\right)^{2}} \abs{\gamma_{a}(\vec{x},\vec{y})\gamma_{s}(\vec{x},\vec{y})^{1/2}\gamma_{s}(\vec{x},\vec{y})^{-1/2}u(\vec{x})(u(\vec{x})-u(\vec{y}))}\,d\vec{y}\,d\vec{x} \\
    &\geq \frac{1}{2} \opnorm{u}^{} - \varepsilon\iint\limits_{\left(\ouo\right)^{2}} \gamma_{s}(\vec{x},\vec{y})\abs{(u(\vec{x})-u(\vec{y}))}^{2}\,d\vec{y}\,d\vec{x} - \frac{1}{4\varepsilon}\iint\limits_{\{\gamma_{s}(\xb,\yb)\neq0\}^{2}} \frac{\gamma_{a}(\vec{x},\vec{y})^{2}}{\gamma_{s}(\vec{x},\vec{y})}u(\vec{x})^{2}\,d\vec{y}\,d\vec{x} \\
    &\geq \left(\frac{1}{2}-\varepsilon\right) \opnorm{u}^{2} - \frac{A_{\gamma}}{4\varepsilon}\norm{u}_{L^{2}(\ouo)}^{2}.
  \end{align*}
  Hence, for \(\varepsilon\) small enough, we obtain
  \begin{align*}
    \mathcal{A}(u,u) \geq C\opnorm{u}^{2} -c\norm{u}_{L^{2}(\ouo)}^{2}.
  \end{align*}
\end{proof}

The Poincar\'e inequality and Proposition \ref{prop:Agamma} yield the continuity of the bilinear form $\mcA$, as illustrated in the following lemma. 
\begin{lemma}\label{lem:Acontinuity}
For all \(u,v\in \V\) 
\begin{equation*}
\abs{\mathcal{A}(u,v)} \leq C(C_{P},A_{\gamma})\opnorm{u}\opnorm{v},
\end{equation*}
i.e., the bilinear form $\mcA(\cdot,\cdot)$ is continuous on \(\V\times \V\).
\end{lemma}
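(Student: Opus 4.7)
The plan is to mirror the splitting used in the G{\aa}rding inequality proof, but to use Cauchy--Schwarz directly rather than Young's inequality, so as to produce a bound of the product form $\opnorm{u}\opnorm{v}$ rather than a sum of squared seminorms.

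First I would decompose $\mcA(u,v)$ into a symmetric and an antisymmetric contribution by writing $\gamma=\gamma_s+\gamma_a$:
\begin{align*}
  \mcA(u,v) = \iint_{(\ouo)^2} \gamma_s(\xb,\yb)(u(\xb)-u(\yb))v(\xb)\,d\yb\,d\xb + \iint_{(\ouo)^2} \gamma_a(\xb,\yb)(u(\xb)-u(\yb))v(\xb)\,d\yb\,d\xb.
\end{align*}
For the symmetric term I would use the standard symmetrization trick: swap $\xb\leftrightarrow\yb$ in the integral, exploit $\gamma_s(\xb,\yb)=\gamma_s(\yb,\xb)$, and average the two expressions to rewrite it as
\begin{align*}
  \frac{1}{2}\iint_{(\ouo)^2} \gamma_s(\xb,\yb)(u(\xb)-u(\yb))(v(\xb)-v(\yb))\,d\yb\,d\xb,
\end{align*}
after which a direct application of Cauchy--Schwarz factoring $\gamma_s^{1/2}$ onto each factor yields a bound by $\tfrac{1}{2}\opnorm{u}\opnorm{v}$.

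For the antisymmetric term, I would split the integrand as $\gamma_a=\gamma_s^{1/2}\cdot\gamma_a\gamma_s^{-1/2}$ on $\{\gamma_s\neq 0\}$ (which contains the support of $\gamma_a$ since $|\gamma_a|\leq\gamma_s$) and apply Cauchy--Schwarz to pair $\gamma_s^{1/2}(u(\xb)-u(\yb))$ with $\gamma_a\gamma_s^{-1/2}v(\xb)$:
\begin{align*}
  \left| \iint \gamma_a(u(\xb)-u(\yb))v(\xb)\,d\yb\,d\xb \right| \leq \opnorm{u}\left(\iint_{\{\gamma_s\neq 0\}} \frac{\gamma_a(\xb,\yb)^{2}}{\gamma_s(\xb,\yb)} v(\xb)^{2}\,d\yb\,d\xb\right)^{1/2}.
\end{align*}
Here the key ingredient is Proposition~\ref{prop:Agamma}, which lets me pull out the $\yb$ integral uniformly and obtain the bound $A_\gamma \|v\|_{L^2(\ouo)}^{2}$. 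Finally, the Poincar\'e inequality upgrades $\|v\|_{L^2(\ouo)}$ to $C_P\opnorm{v}$, giving an overall bound of $\sqrt{A_\gamma}\,C_P\opnorm{u}\opnorm{v}$ for the antisymmetric piece.

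Adding the two contributions produces the claimed continuity constant depending only on $C_P$ and $A_\gamma$. I don't foresee a serious obstacle: both the symmetrization and the Cauchy--Schwarz with weight $\gamma_s^{\pm1/2}$ are the very same manoeuvres already executed in the G{\aa}rding proof, so the only subtlety is being careful to keep the integrand of the second factor in the Cauchy--Schwarz purely a function of $\xb$ (via $v(\xb)^2$) so that Proposition~\ref{prop:Agamma} applies after taking the supremum in $\xb$.
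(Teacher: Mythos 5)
Your proof is correct and follows essentially the same route as the paper: decompose $\gamma=\gamma_s+\gamma_a$, bound the symmetric piece by $\opnorm{u}\opnorm{v}$ via symmetrization and Cauchy--Schwarz, and bound the antisymmetric piece by a weighted Cauchy--Schwarz with weight $\gamma_s^{\pm 1/2}$, then invoke Proposition~\ref{prop:Agamma} and the Poincar\'e inequality. The only cosmetic difference is that you track the factor $\tfrac12$ from symmetrization, whereas the paper absorbs it into the constant; also note the paper's displayed second factor writes $\gamma_a/\gamma_s$ where it should be $\gamma_a^2/\gamma_s$, which your version has correctly.
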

\begin{proof} 
By combining the Poincar\'e inequality and Proposition  \ref{prop:Agamma}, we have
\begin{align*}
\abs{\mathcal{A}(u,v)}
&= \abs{\;\;\iint\limits_{\left(\ouo\right)^{2}} \left(\gamma_{s}(\vec{x},\vec{y})+\gamma_{a}(\vec{x},\vec{y})\right)(u(\vec{x})-u(\vec{y}))v(\vec{x})\,d\vec{y}\,d\vec{x}\,} \\
&\leq \opnorm{u}\opnorm{v}+\left(\;\;\iint\limits_{\left(\ouo\right)^{2}} \gamma_{s}(\vec{x},\vec{y})(u(\vec{x})-u(\vec{y}))^{2}\,d\vec{y}\,d\vec{x}\right)^{1/2}\left(\;\;\iint\limits_{\{\gamma_{s}(\xb,\yb)\neq0\}^{2}} \frac{\gamma_{a}(\vec{x},\vec{y})}{\gamma_{s}(\vec{x},\vec{y})}v(\vec{x})^{2}\,d\vec{y}\,d\vec{x}\right)^{1/2}  \\
  &\leq \opnorm{u}\opnorm{v} + A_{\gamma}^{1/2} \opnorm{u} \norm{v}_{L^{2}({\ouo})} \\[2mm]
  & \leq (1+C_{P}A_{\gamma}^{1/2})\opnorm{u}\opnorm{v}.
\end{align*}
\end{proof}

Before proceeding with the well-posedness analysis, we prove a result for a perturbation of the weak problem \eqref{eq:weak-form}. First, let
\begin{equation*}
  \underline{\gamma}(\vec{x},\vec{y}):= \frac{1}{\abs{\vec{x}-\vec{y}}^{n+2\underline{s}}}\mathcal{X}_{\abs{\vec{x}-\vec{y}}\leq \underline{\delta}},
  \quad {\rm and} \quad 
  \tilde{H}_{\underline{\delta}}^{\underline{s}}(\Omega) := H_{\Omega}(\ouo;\underline{\gamma}).
\end{equation*}
Then, there exists some \(\lambda>0\) such that
\begin{align*}
  \gamma_{s}(\vec{x},\vec{y})\geq \lambda \underline{\gamma}(\vec{x},\vec{y}),
\end{align*}
and hence for all $u\in\V$,
\begin{align}\label{eq:E-lambda}
  \opnorm{u}^{2}=\iint\limits_{(\ouo)^{2}} (u(\vec{x})-u(\vec{y}))^{2}\gamma_{s}(\vec{x},\vec{y})\,d\vec{y}\,d\vec{x}
  &\geq \lambda \iint\limits_{(\ouo)^{2}} (u(\vec{x})-u(\vec{y}))^{2}\underline{\gamma}(\vec{x},\vec{y})\,d\vec{y}\,d\vec{x} = \opnorm[\underline{\gamma}]{u}^{2}.
\end{align}
Hence, \(\V\) is continuously embedded in \(\tilde{H}_{\underline{\delta}}^{\underline{s}}(\Omega)\).
Since \(\tilde{H}_{\underline{\delta}}^{\underline{s}}(\Omega)\) is compactly embedded in \(L^{2}(\Omega)\), and \(L^{2}(\Omega)\) is compactly embedded in \(\V^{*}\), \(\left(\V,L^{2}(\Omega),\V^{*}\right)\) forms a Gelfand triple, and the embedding of \(\V\) into \(\V^{*}\) is compact.
This fact, and the G{\aa}rding inequality \eqref{eq:Garding}, directly imply the well-posedness of a perturbation of problem \eqref{eq:weak-form} and a bound on its solution, as shown in the following lemma.
In what follows, we denote the duality pairing between $\V$ and its dual in the usual manner, i.e. $\langle \cdot,\cdot\rangle$.
\begin{lemma}\label{lem:perturbed-problem}
Given a positive constant $c$ and a function $f\in \V^{*}$, the problem 
\begin{align*}
\hbox{Find $u\in \V$ such that} \;\;
\mathcal{A}(u,v)+c(u,v)_{L^{2}(\omg)} &= \left\langle f,v\right\rangle, 
\;\; \forall\;v\in \V, \;\;\hbox{and $u=0$ in $\omgi$}
\end{align*}
is well-posed and its solution is such that \(\opnorm{u}\leq C\norm{f}_{\V^{*}}\), for a positive constant $C$.
\end{lemma}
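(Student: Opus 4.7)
The strategy is to verify the hypotheses of the Lax--Milgram theorem for the bilinear form $B(u,v):=\mcA(u,v)+c(u,v)_{L^2(\omg)}$ on $\V\times\V$, from which existence, uniqueness, and the a priori bound will all follow at once.

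For continuity, Lemma \ref{lem:Acontinuity} already provides $|\mcA(u,v)|\leq C\opnorm{u}\opnorm{v}$. For the zeroth-order term, Cauchy--Schwarz combined with the Poincaré inequality yield $|(u,v)_{L^2(\omg)}|\leq \norm{u}_{L^2(\omg)}\norm{v}_{L^2(\omg)}\leq C_P^2\opnorm{u}\opnorm{v}$, where I have also used that elements of $\V$ vanish on $\omgi$. Summing these bounds shows that $B$ is continuous on $\V\times\V$.

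For coercivity, I would combine the G{\aa}rding inequality \eqref{eq:Garding}, which I rewrite as $\mcA(u,u)\geq C\opnorm{u}^2 - c_0\norm{u}^2_{L^2(\ouo)}$ with constant $c_0$ (renamed to avoid collision with the $c$ in the lemma), with the additional contribution $c\norm{u}^2_{L^2(\omg)}$. Since $\norm{u}_{L^2(\ouo)}=\norm{u}_{L^2(\omg)}$ for $u\in\V$, this gives $B(u,u)\geq C\opnorm{u}^2+(c-c_0)\norm{u}^2_{L^2(\omg)}$. Provided $c$ is taken at least as large as the G{\aa}rding constant, which is the regime in which the lemma is meant to be invoked (as suggested by the preceding text announcing that it follows directly from Gårding), one concludes $B(u,u)\geq C\opnorm{u}^2$.

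With continuity and coercivity secured, and noting that $v\mapsto\langle f,v\rangle$ is a continuous linear functional on $\V$ of dual norm at most $\norm{f}_{\V^*}$, the Lax--Milgram theorem produces a unique solution $u\in\V$. Testing with $v=u$ and applying coercivity yields $C\opnorm{u}^2\leq B(u,u)=\langle f,u\rangle\leq\norm{f}_{\V^*}\opnorm{u}$, hence $\opnorm{u}\leq C\norm{f}_{\V^*}$. The only delicate point of the argument is thus ensuring that $c$ dominates the G{\aa}rding constant so that the indefinite $L^2$ contribution is absorbed; once this is arranged, the remainder is a routine Lax--Milgram reduction, with every ingredient already established in the preceding lemmas.
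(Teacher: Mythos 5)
Your proof is correct and coincides with the approach the paper intends: the authors explicitly state that the proof is ``simply based on application of \eqref{eq:Garding} to the perturbed bilinear form, and is hence omitted,'' which is exactly your coercivity step, after which Lax--Milgram (using the continuity from Lemma~\ref{lem:Acontinuity} and Poincar\'e) delivers existence, uniqueness, and the bound. You also correctly identify the one subtlety that the lemma's phrase ``given a positive constant $c$'' should be read as ``for $c$ at least as large as the G{\aa}rding constant,'' which is consistent with how the lemma is subsequently used to define $\mcK=(\mcL+cI)^{-1}$.
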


\smallskip The proof is simply based on application of \eqref{eq:Garding} to the perturbed bilinear form, and is hence omitted. Note that \eqref{eq:E-lambda} also allows us to apply \cite[Theorem 4.1]{Felsinger2015}, which states that the bilinear form $\mcA$ satisfies a weak maximum principle, which we report for completeness.
\begin{lemma}
Let $\gamma$ and $\mcA$ be defined as in \eqref{eq:Felsinger-kernel} and \eqref{eq:A} and let $u\in\V$ satisfy
\begin{equation*}
\mcA(u,v)\leq 0 \quad \forall \;v\in \V,    
\end{equation*}
then, $\sup_\omg u \leq 0$. In particular, 
\begin{equation}\label{eq:trivial-solution}
\mcA(u,v) = 0 \;\; \forall \;v\in \V \qquad
\Longleftrightarrow \qquad u=0.
\end{equation}
\end{lemma}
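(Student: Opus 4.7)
The plan is to deduce this lemma from Theorem 4.1 of \cite{Felsinger2015}, whose applicability to our setting is precisely what the paragraph preceding the statement is setting up. First, I would verify that our kernel $\gamma$ fits the framework of that theorem. The required ingredients are: (i) the uniform bound $\sup_{\xb\in\ouo}\int \gamma_a(\xb,\yb)^2/\gamma_s(\xb,\yb)\,d\yb \leq A_\gamma$, which is exactly Proposition \ref{prop:Agamma}; (ii) a lower bound on $\gamma_s$ by a truncated fractional kernel, which is encoded in \eqref{eq:E-lambda} via \eqref{eq:kernelBoundsNear} and \eqref{eq:phi-assumptions}; and (iii) coercivity of $\mcA$ up to an $L^2$ perturbation, namely the Gårding inequality \eqref{eq:Garding}, together with the compact embedding $\V\hookrightarrow L^2(\omg)$ from the Gelfand-triple discussion.

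The argument underlying the theorem is to test against the positive part $u^+:=\max(u,0)$, which belongs to $\V$ because $u$ vanishes on $\omgi$; one splits $\mcA(u,u^+)$ into symmetric and anti-symmetric contributions as in the proof of \eqref{eq:Garding}, applies Young's inequality to absorb the anti-symmetric term into a small multiple of $\opnorm{u^+}^2$ using Proposition \ref{prop:Agamma}, and concludes $\opnorm{u^+}=0$, hence $u^+\equiv 0$ on $\omg$. Since the hypothesis $\mcA(u,v)\leq 0$ holds for every $v\in\V$ (in particular for $v=u^+$), this yields $\sup_\omg u \leq 0$.

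For the characterization \eqref{eq:trivial-solution}, the implication $\Leftarrow$ is immediate. For $\Rightarrow$, observe that $\mcA(u,v)=0$ for all $v\in\V$ implies, by linearity, both $\mcA(u,v)\leq 0$ and $\mcA(-u,v)=-\mcA(u,v)\leq 0$ for every $v\in\V$. Two applications of the weak maximum principle to $u$ and $-u$ give $\sup_\omg u \leq 0$ and $\sup_\omg(-u)\leq 0$, whence $u\equiv 0$ on $\omg$. Combining with $u=0$ on $\omgi$ (built into $\V$) gives $u\equiv 0$ on $\ouo$.

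The main subtlety I anticipate is transferring the argument of \cite{Felsinger2015}, which is formulated for kernels supported on all of $\mbRn$, to our truncated setting with finite horizon $\delta\in(0,\infty]$. The embedding \eqref{eq:E-lambda} of $\V$ into $\tilde H^{\underline s}_{\underline\delta}(\omg)$ is the bridge: it confines every estimate (Poincaré, coercivity, test-function truncation) to the balls $B_\delta(\xb)$, so no behavior of $\gamma$ at infinity is required and the proof of the weak maximum principle in \cite{Felsinger2015} carries over verbatim.
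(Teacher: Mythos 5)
Your main approach — invoking \cite[Theorem 4.1]{Felsinger2015} once the embedding \eqref{eq:E-lambda} is in place — is exactly what the paper does; the paper provides no independent proof either, only the observation that \eqref{eq:E-lambda} makes the Felsinger--Kassmann--Voigt theorem applicable. Your verification of the three ingredients and your derivation of \eqref{eq:trivial-solution} by applying the weak maximum principle to both $u$ and $-u$ are both correct and well aligned with the paper's intent.

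However, your sketch of the \emph{internal} argument of the cited theorem overstates what Young's inequality delivers. After splitting $\mcA(u,u^+)$ into symmetric and anti-symmetric parts and applying Young's inequality with parameter $\varepsilon$, the anti-symmetric contribution is controlled by
\begin{equation*}
  \varepsilon\opnorm{u^+}^{2} + \frac{A_{\gamma}}{4\varepsilon}\norm{u^+}_{L^{2}(\ouo)}^{2},
\end{equation*}
so only the first term is a ``small multiple of $\opnorm{u^+}^{2}$''; the second term is a genuinely large constant times $\norm{u^+}_{L^{2}}^{2}$. Combining with the symmetric part gives $0 \geq \mcA(u,u^+) \geq (\tfrac12-\varepsilon)\opnorm{u^+}^{2} - \tfrac{A_\gamma}{4\varepsilon}\norm{u^+}_{L^2}^{2}$, which is exactly the Gårding form and does \emph{not} immediately force $\opnorm{u^+}=0$, since the Poincaré constant has no reason to dominate $A_\gamma/(4\varepsilon)$. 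The actual proof of \cite[Theorem 4.1]{Felsinger2015} is more involved (it is closer in spirit to a De~Giorgi-type level-set argument than to a single Gårding-plus-Poincaré estimate). This does not undermine your citation-based proof, which matches the paper, but the sketch you offer as motivation would not itself close if taken as a proof.
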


An immediate consequence of Lemma \ref{lem:perturbed-problem} is the fact that the operator $\mcK:=(\mcL+cI)^{-1}$ mapping \(\V^{*}\rightarrow \V\), is a compact operator. This fact allows us to use the Fredholm alternative theorem to prove the well-posedness of problem \eqref{eq:weak-form}.
\smallskip
\begin{theorem}\label{thm:Fredholm}
Given a function $f\in \V^{*}$, the problem 
\begin{align*}
\hbox{Find $u\in \V$ such that} \;\;
\mathcal{A}(u,v)= \left\langle f,v\right\rangle, 
\;\; \forall\;v\in \V, \;\;\hbox{and $u=0$ in $\omgi$}
\end{align*}
has a unique solution $u\in\V$.
\end{theorem}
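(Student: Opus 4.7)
The plan is to recast the weak formulation as a Fredholm equation \((I-T)u=g\) on \(\V\) with \(T\) compact, and then show that \(I-T\) is injective using the weak maximum principle. All the machinery has been assembled in the preceding lemmas, so the argument should be essentially a formal reduction.

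First, I fix \(c>0\) from the G{\aa}rding inequality \eqref{eq:Garding} and let \(\mcK:\V^{*}\to\V\) denote the bounded linear solution operator for the perturbed problem supplied by Lemma \ref{lem:perturbed-problem}. Define \(J:\V\to\V^{*}\) by \(\langle Ju,v\rangle:=(u,v)_{L^{2}(\omg)}\); this is well defined because \(L^{2}(\omg)\hookrightarrow\V^{*}\) in the Gelfand-triple structure identified just before Lemma \ref{lem:perturbed-problem}. Adding \(c(u,v)_{L^{2}(\omg)}\) to both sides of \(\mcA(u,v)=\langle f,v\rangle\) shows that \(u\in\V\) solves the theorem's problem if and only if
\[
(I-c\,\mcK J)\,u \;=\; \mcK f \qquad \text{in }\V.
\]

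Second, I would verify that \(T:=c\,\mcK J\) is a compact operator on \(\V\). The map \(J\) factors as \(\V\hookrightarrow\tilde{H}_{\underline{\delta}}^{\underline{s}}(\omg)\hookrightarrow L^{2}(\omg)\hookrightarrow\V^{*}\), where the first embedding is continuous by \eqref{eq:E-lambda}, the second is compact by the standard truncated fractional-Sobolev compactness result already used above, and the third is part of the Gelfand-triple identification. Hence \(J\) is compact, and composing with the bounded operator \(\mcK\) yields a compact \(T\) on \(\V\). The Fredholm alternative then applies to \(I-T\) on the Hilbert space \(\V\).

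Third, the Fredholm alternative reduces the theorem to showing that \(\ker(I-T)=\{0\}\). Any \(u\in\ker(I-T)\) satisfies \(\mcA(u,v)=0\) for every \(v\in\V\), and the weak maximum principle encoded in \eqref{eq:trivial-solution} immediately forces \(u=0\). Existence and uniqueness for every \(f\in\V^{*}\) then follow, together with the a priori bound \(\opnorm{u}\leq C\norm{f}_{\V^{*}}\) inherited from \(\mcK\) and the open mapping theorem. The only non-routine ingredient is the applicability of the maximum-principle result of \cite{Felsinger2015} to the non-standard kernel \eqref{eq:gamma}; this is precisely what \eqref{eq:E-lambda} and Proposition \ref{prop:Agamma} were set up to guarantee, so I expect no further difficulty beyond citing them.
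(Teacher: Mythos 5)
Your proof is correct and takes essentially the same approach as the paper: both establish compactness of the resolvent $\mcK=(\mcL+cI)^{-1}$ via the Gelfand-triple embedding, invoke the Fredholm alternative, and close the argument by using the weak maximum principle \eqref{eq:trivial-solution} to rule out a nontrivial kernel. The only difference is presentational --- you phrase the reduction as a second-kind Fredholm equation $(I-c\,\mcK J)u=\mcK f$, while the paper works with the spectral form $\mcK-\eta I$ at $\eta=1/c$; these are the same argument.
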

\begin{proof}
Consider the operator $\mcK-\eta I$. We have the following identity: 
\begin{align*}
  \mcK-\eta I = -\eta \mcK (\mcK^{-1}-\eta^{-1}I) = -\eta \mcK(\mcL+(c-\eta^{-1})I).
\end{align*}
By the Fredholm alternative, either \(\eta\) is an eigenvalue of \(\mcK\), or \((\mcK-\eta I)^{-1}\) is a bounded linear operator. We show that the first option leads to a contradiction.

Let \(\eta=1/c\) and assume that \(\eta\) is an eigenvalue of \(\mcK\). This implies that $\mcA(u,v)=0$ for all $v\in \V$ and, by \eqref{eq:trivial-solution}, that $u=0$. Hence, \(\eta\) is not an eigenvalue of \(\mcK\). This ends the proof since we can conclude that \((\mcK-\eta I)^{-1}\), and subsequently \((-\eta \mcK-\eta I)^{-1}\mcK=\mcL^{-1}\), are bounded linear operators. In other words, the variational problem associated with the operator $\mcL$ is well-posed. 
\end{proof}

\section{Finite element discretization and efficient matrix assembly}\label{sec:FEM}
In this section we briefly introduce the finite element (FE) discretization of problem \eqref{eq:weak-form} and provide details regarding numerical integration.

We choose a family of finite-dimensional subspaces $V_{h} \subset \V$, parameterized by the mesh size $h$.
We define the Galerkin approximation $u_h\in V_{h}$ as the solution of the following problem: find $u_h\in V_{h}$ such that
\begin{equation}\label{eq:Galerkin-projetion}
  \mcA(u_{h},v_{h})= \int\limits_\omg f\,v_h\,d\xb, \quad \forall\,v_h\in V_{h}.
\end{equation}
As in \cite{Du2012}, we consider finite element approximations for the case that both $\omg$ and $\omgi$ are polyhedral domains.
We partition $\omg$ into finite elements and denote by $h$ the diameter of the largest element in the partition. We assume that the partition is shape-regular and quasi-uniform \cite{Brenner2008} as the grid size $h\to 0$ and we choose the subspace $V^h$ to consist of piecewise polynomials of degree no more than $m$ defined with respect to the partition.

We perform numerical integration on pairs of elements, by rewriting the bilinear form \eqref{eq:A2} as the sum of integrals over the partition, i.e.
\begin{align*}
  \mcA(u_{h},v_{h})= \frac{1}{2} \sum_{K}\sum_{\tilde{K}} \iint_{K\times\tilde{K}} (u_{h}(\xb)-u_{h}(\yb)) \left(\gamma(\xb,\yb)v_{h}(\xb) - \gamma(\yb,\xb)v_{h}(\yb)\right) \,d\yb\,d\xb.
\end{align*}

The design of quadrature rules for the integrals above depends on the regularity properties of the functions $s$ and $\phi$.
If \(s\) and \(\phi\) are constant on a pair of elements \(K\times\tilde{K}\), we can use the state-of-the-art quadrature rules described in \cite{AinsworthGlusa2018,AinsworthGlusa2017} designed for the constant-order fractional Laplacian, i.e. for the case $\delta=\infty$.
However, when this is not the case, the use of more sophisticated and often more expensive quadrature rules, such as adaptive quadrature rules \cite{PiessensDoncker-KapengaEtAl2012_Quadpack}, becomes mandatory to prevent the integration error to be dominant.
Conditions for well-posedness derived in Proposition~\ref{prop:Agamma} are such that any non-symmetric kernel falls into the latter category, hence requiring higher computational effort.
In addition, note that, as for the constant-order fractional Laplacian, when \(K\cap\tilde{K}\neq\emptyset\), the integrand function features a singularity at  \(\xb=\yb\). In this case, the domain of integration must be partitioned into subdomains in order to avoid the singularity.

In the specific case of piecewise constant fractional order, we speed up the assembly of the discrete operator and subsequent matrix-vector multiplications by generalizing the panel-clustering approach of \cite{AinsworthGlusa2018,AinsworthGlusa2017} developed for the constant-order fractional Laplacian.
Informally speaking, in the standard version of this approach, the unknowns are recursively grouped into nodes of a tree structure, with the root node containing all unknowns.
Pairs of clusters correspond to sub-blocks of the discrete operator.
If certain conditions on their location in physical space are satisfied, sub-blocks are approximated using Chebyshev interpolation, otherwise they are assembled using quadrature\footnote{See the works by Ainsworth and Glusa\cite{AinsworthGlusa2018,AinsworthGlusa2017} for a detailed description of this approach for constant parameters and infinite horizon.}.
We generalize this algorithm to finite horizon and piecewise constant coefficients by restricting the approximation of matrix sub-blocks to cluster pairs that are strictly contained within the horizon \(\delta\) of each other and whose interaction reduces to a constant fractional kernel.

\section{Numerical tests}\label{sec:numerics}

\subsection{Comparison of different types of kernels}

In order to highlight the different behaviors of available fractional kernels, we restrict ourselves to a one-dimensional problem where model parameters are constant over two subdomains.
This configuration can be considered a one-dimensional counterpart of the two-dimensional configurations displayed in Figure \ref{fig:interface-domain-config}.
Specifically, we let \(\omg=(-1,1)\), \(\delta=1\), and consider kernels given by \eqref{eq:gamma} that are piecewise constant with respect to the partition \(\ouo=((\ouo)\cap\mbR_{-})\cup((\ouo)\cap\mbR_{+})\).
In order for the assumptions of Proposition~\ref{prop:Agamma} to be satisfied, such kernels need to be symmetric, i.e. \(s\) and \(\phi\) need to take the form
\begin{align*}
  \psi(x,y;\psi_{+},\psi_{-},\psi_{\pm})
  &= \begin{cases}
    \psi_{+} & \text{if } x,y>0,\\
    \psi_{-} & \text{if } x,y<0, \\
    \psi_{\pm} & \text{else}.
  \end{cases}
\end{align*}
Note that this excludes the interesting cases, that can be found in the literature, of piecewise constant \(\phi\) or \(s\) that only depend on \(x\).
Hence, we also consider functions of the following form and compare them with the proposed model.
\begin{align*}
  \eta(x;\eta_{+},\eta_{-},r,\kappa)
  &=
    \begin{cases}
      \eta_{+} & \text{if } x\geq r \\
      \frac{1}{2}(\eta_{+}+\eta_{-})+\frac{1}{2}(\eta_{+}-\eta_{-}) \frac{\arctan{\kappa\xb}}{\arctan{\kappa r}} & \text{if } -r\leq x\leq r\\
      \eta_{-} & \text{if } x\leq -r.
    \end{cases}
\end{align*}
Here, \(r\) determines the size of the transition region between the two states \(\eta_{+}\) and \(\eta_{-}\), and \(\kappa\) the steepness of the transition.
Since \(\eta\) is Lipschitz continuous, we can use \(\eta\) for both \(s\) and \(\phi\) and satisfy the conditions of Proposition~\ref{prop:Agamma}.

We numerically solve the diffusion problem \eqref{eq:strong-single-general} for homogeneous Dirichlet data \(g(x)=0\) and forcing term \(f(x)=\mcX_{\abs{x-0.4}<0.2}\).
In all tests conducted in this section we use continuous piecewise linear FE on a uniform mesh of size $h=2^{-10}$.

With the purpose of highlighting how a variable fractional order affects the qualitative behavior of the solution across the interface, we first compare the family of symmetric kernels reported in Table~\ref{tab:sym} and display the corresponding solutions in Figure~\ref{fig:sym-solutions}.
We observe that the solutions for constant \(s\) seem to be differentiable, whereas the cases with piecewise defined \(s\) are only continuous at \(x=0\).
\begin{table}
  \centering
  \begin{tabular}{lll}
    identifier & \(s\) & \(\phi\) \\
    \hline
    const, const & \(0.75\) & \(1\) \\
    const, sym & \(0.75\) & \(\psi(\cdot;1,1,0.1)\) \\
    sym, const & \(\psi(\cdot;0.75,0.25,0.5)\) & \(1\) \\
    sym, sym & \(\psi(\cdot;0.75,0.25,0.5)\) & \(\psi(\cdot;1,1,0.1)\) \\
  \end{tabular}
  \caption{Symmetric kernels}
  \label{tab:sym}
\end{table}
\begin{figure}
  \centering
  \includegraphics{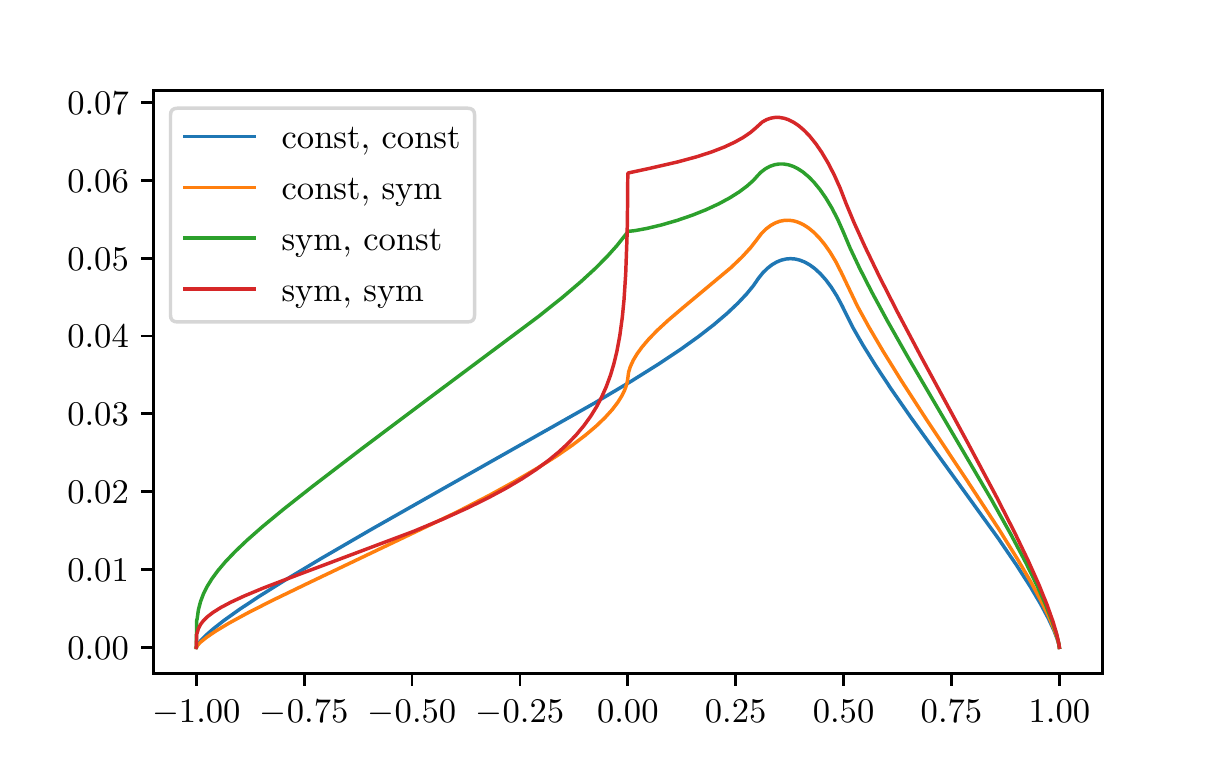}
  \caption{Solutions for symmetric kernels with \(s\) and \(\phi\) given in Table~\ref{tab:sym}.}
  \label{fig:sym-solutions}
\end{figure}

\begin{figure}
  \centering
  \includegraphics{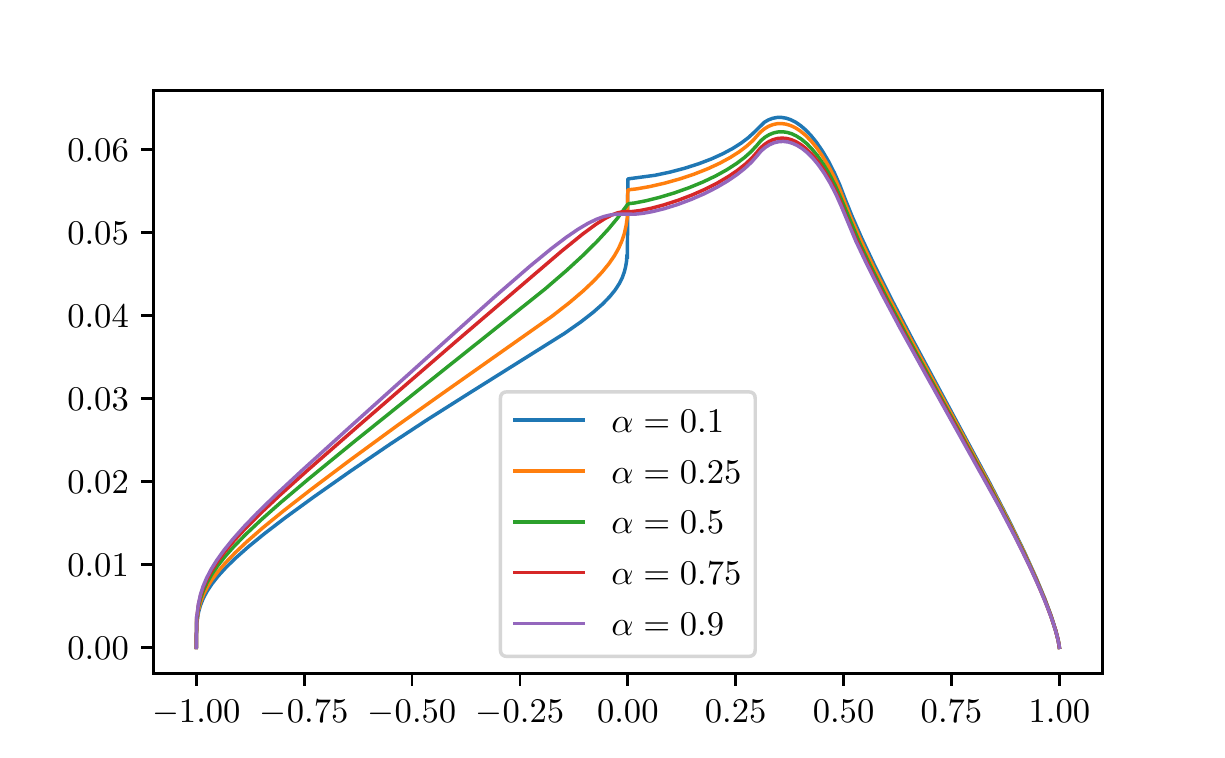}
  \caption{Solutions for symmetric kernels with \(s=\psi(\cdot;0.75,0.25,\alpha)\) for \(\alpha\in\{0.1, 0.25, 0.5, 0.75, 0.9\}\) and \(\phi=1\).}
  \label{fig:offdiag-solutions}
\end{figure}

In a second experiment, we analyze how a variation of the fractional order of interactions between the two sub-domains affects the regularity of the solution.
We choose \(s=\psi(\cdot;0.75,0.25,\alpha)\) for \(\alpha\in\{0.1, 0.25, 0.5, 0.75, 0.9\}\) and \(\phi=1\) and report the corresponding solutions in Figure~\ref{fig:offdiag-solutions}; we observe that the choice of the fractional order for the interactions impacts the entity of the jump of the derivative at \(x=0\).

\begin{figure}
  \centering
  \includegraphics{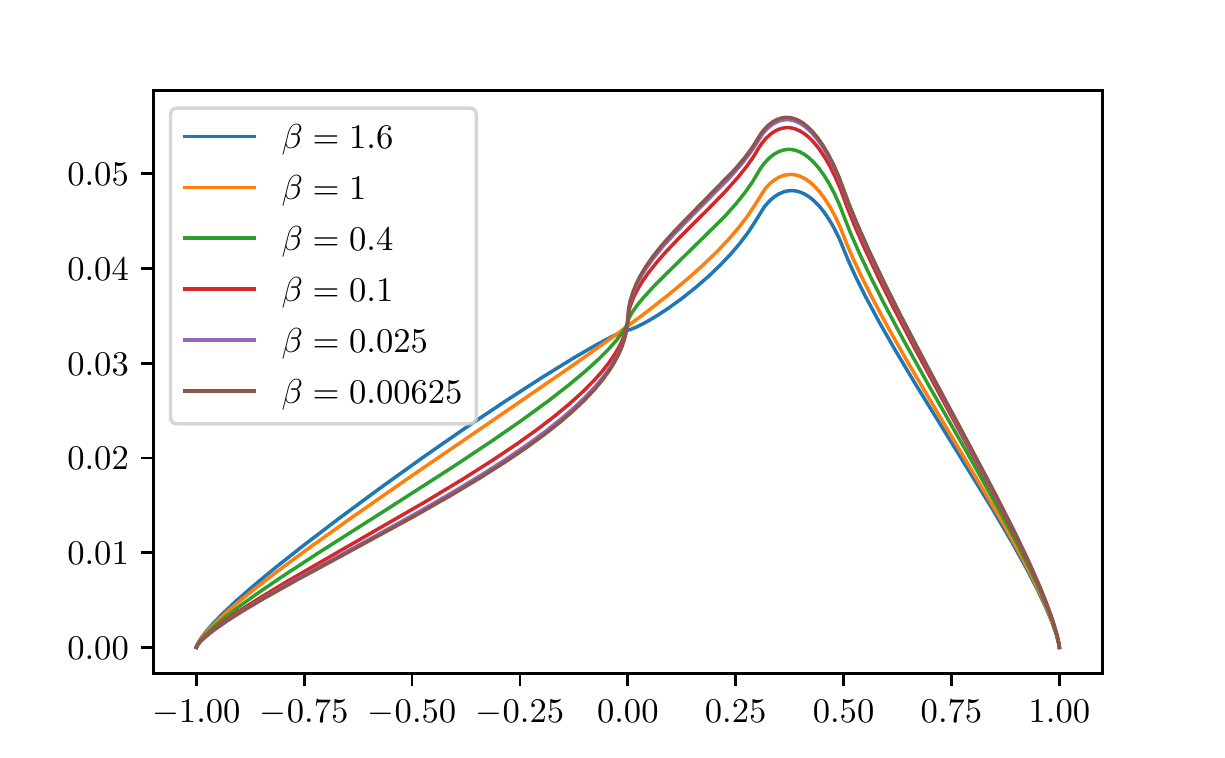}
  \caption{Solutions for symmetric kernels with \(s=0.75\) and \(\phi=\psi(\cdot;0.75,0.25,\beta)\) for \(\beta\in\{0.00625,0.025,0.1,0.4,1,1.6\}\).}
  \label{fig:offdiag-phi-solutions}
\end{figure}

In a third experiment, we analyze the impact of the parameter $\phi$.
We keep \(s=0.75\) fixed and vary the interaction between the two sub-domains by setting \(\phi=\psi(\cdot;0.75,0.25,\beta)\) for \(\beta\in\{0.00625,0.025,0.1,0.4,1,1.6\}\).
Results are reported in Figure~\ref{fig:offdiag-phi-solutions}; we observe that while \(\beta\) affects the steepness of the transition from \(\xb<0\) to \(\xb>0\), the derivative of the solution does not display any jumps.

\begin{table}
  \centering
  \begin{tabular}{lll}
    identifier & \(s\) & \(\phi\) \\
    \hline
    const, const & \(0.75\) & \(1\) \\
    const, nonsym & \(0.75\) & \(\eta(\cdot;1,0.1,0.1,200)\) \\
    nonsym, const & \(\eta(\cdot;0.75,0.25,0.1,200)\) & \(1\) \\
    nonsym, nonsym & \(\eta(\cdot;0.75,0.25,0.1,200)\) & \(\eta(\cdot;1,0.1,0.1,200)\) \\
  \end{tabular}
  \caption{Non-symmetric kernels}
  \label{tab:nonsym}
\end{table}
\begin{figure}
  \centering
  \includegraphics{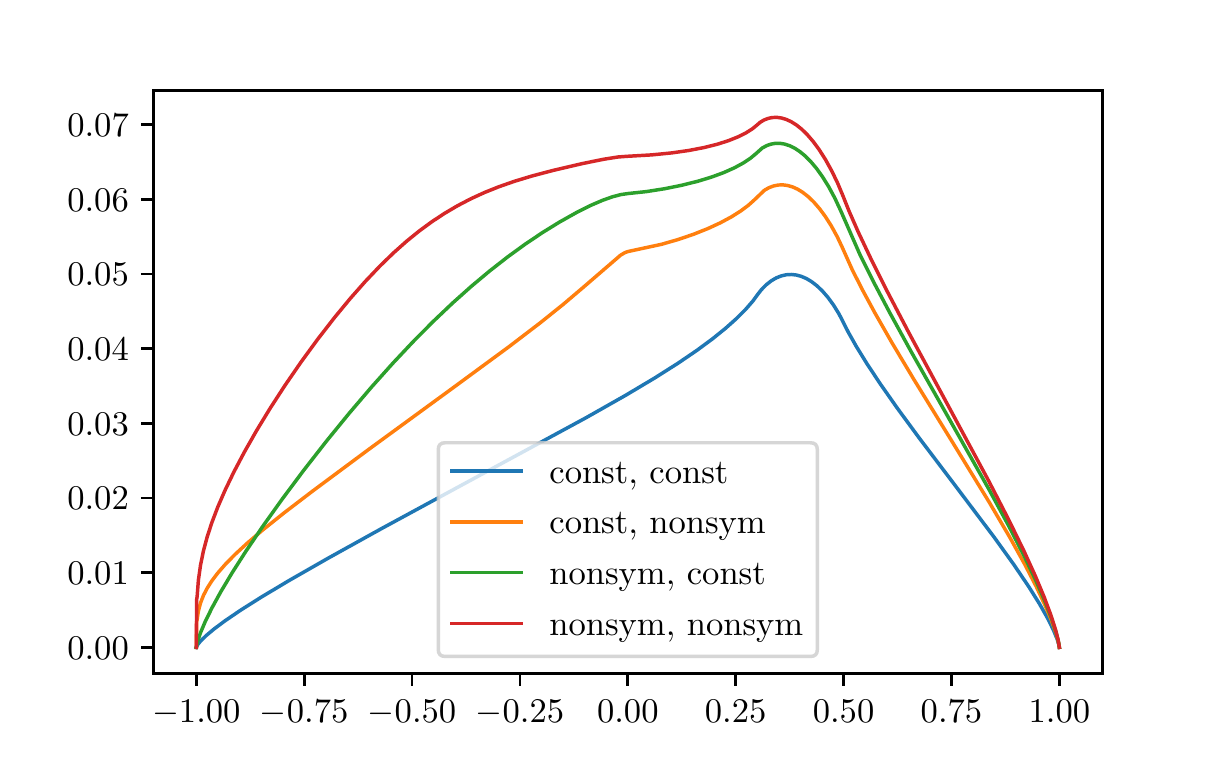}
  \caption{Solutions for non-symmetric kernels with \(s\) and \(\phi\) given in Table~\ref{tab:nonsym}.}
  \label{fig:nonsym-solutions}
\end{figure}

With the purpose of determining the impact of a nonsymmetric kernel on the qualitative behavior of the solutions, in a forth set of experiments, we consider non-symmetric kernels, as given in Table~\ref{tab:nonsym}, and report the corresponding solutions in Figure~\ref{fig:nonsym-solutions}.
These results do not highlight a significant change in the qualitative behavior of $u$ across the interface.

We conclude that the restriction to constant fractional orders \(s\) limits the types of solution behavior that can be recovered, even when allowing for a variable coefficient function \(\phi\).
On the other hand, using nonsymmetric kernels does not appear to lead to significantly different solution behavior.
Given the significant drawbacks in terms of implementation difficulties posed by nonsymmetric kernels, we therefore restrict ourselves to symmetric kernels with constant \(\phi\) in what follows.

\subsection{Convergence with respect to the mesh size}\label{sec:h-convergence}
We consider the convergence of solutions with respect to the mesh size $h$.
In light of the results presented in the previous section, we only focus on symmetric kernel functions $\gamma$.
Since, in general, no closed-form analytic solutions of \eqref{eq:strong-single-general} are known, we compute errors with respect to a reference solution \(u_{\underline{h}}\) obtained on a fine mesh.
We compute the energy and $L^2$ norms as follows:
\begin{align*}
  \opnorm{u_{h}-u_{\underline{h}}}^{2} &= \mcA(u_{h}-u_{\underline{h}},u_{h}-u_{\underline{h}}) = \int_{\Omega}f(u_{h}-u_{\underline{h}}) \,d\xb, \\
  \norm{u_{h}-u_{\underline{h}}}_{L^{2}(\Omega)}^{2} &= \int_{\Omega}(u_{h}-u_{\underline{h}})^{2} \,d\xb.
\end{align*}

Firstly, we consider the one-dimensional case where \(\Omega=(-1,1)\) and \(\delta=1\), with parameters \(s(\xb,\yb)=\psi(\xb,\yb;0.75,0.25,0.5)\), \(\phi\equiv 1\), \(f\equiv1\).
We display the obtained errors in Figure~\ref{fig:convergence1d}.
We observe that \(\opnorm{u_{h}-u_{\underline{h}}} \sim h^{1/2}\), whereas \(\norm{u_{h}-u_{\underline{h}}}_{L^{2}(\Omega)}\sim h^{3/4}\).
The apparent speedup for smaller values of the mesh size \(h\) are due to the fact that we are comparing against \(u_{\underline{h}}\), and not against the unknown exact solution \(u\).
It is known\cite{BG2019vi} that, for constant fractional kernels, the energy norm error converges as \(\mathcal{O}(h^{1/2-\varepsilon})\) for any \(\varepsilon>0\), whereas the \(L^{2}\)-error converges as \(\mathcal{O}(h^{\min\{1,1/2+s\}-\varepsilon})\).
Our numerical results suggest that this result can be generalized to \(\mathcal{O}(h^{1/2-\varepsilon})\) convergence in energy norm and \(\mathcal{O}(h^{\min\{1,1/2+\underline{s}\}-\varepsilon})\) convergence in \(L^{2}\)-norm for the case of variable fractional order.
Here, $\underline s$ is defined as in \eqref{eq:s-assumptions}.

\begin{figure}
  \centering
  \includegraphics[]{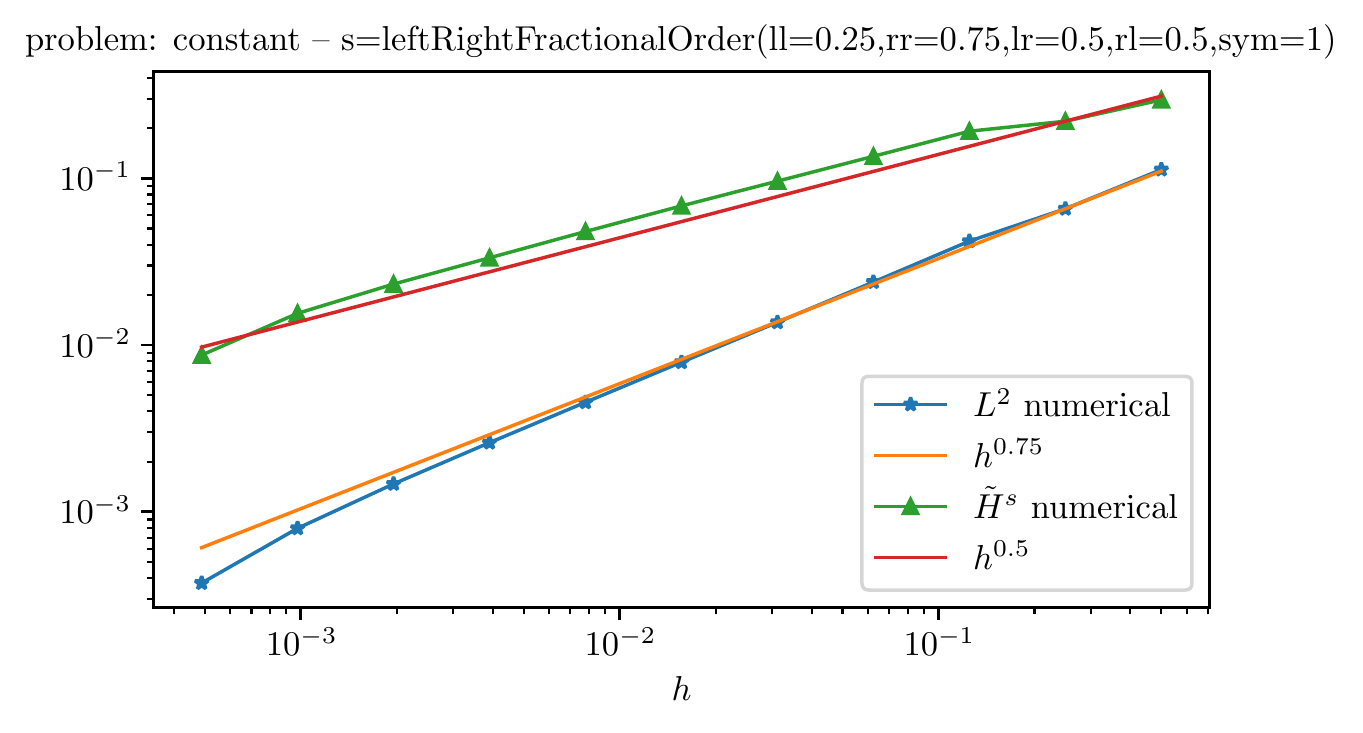}
  \caption{Convergence in \(L^{2}\) and energy norm for a one-dimensional example.}
  \label{fig:convergence1d}
\end{figure}

In a second convergence experiment, we consider the two-dimensional case of \(\Omega=(-1,1)^{2}\), and \(\delta=1/2\), with \(\phi\equiv1\), \(f\equiv1\) and a fractional orders \(s\) with four layers:
\begin{align}\label{eq:fracOrderLayers}
  s(\xb,\yb)&= \frac{1}{2}(\sigma(\xb_{1}) + \sigma(\yb_{1})) &\text{with}&&
  \sigma(z)=
  \begin{cases}
    1/5 & \text{if } z < -1/2, \\
    2/5 & \text{if } -1/2 \leq z < 0, \\
    3/5 & \text{if } 0 \leq z < 1/2, \\
    4/5 & \text{if } 1/2\leq z.
  \end{cases}
\end{align}
This definition is such that the resulting configuration mimics the realistic setting displayed in Figure \ref{fig:interface-domain-config}, right.
The reference solution for \(\underline{h}=0.01\), \(n\approx 65,000\) is displayed in Figure~\ref{fig:layers2d}.
As expected, it can be observed that the solution behavior is more diffusive for layers with larger fractional order.
Convergence results are reported in Figure~\ref{fig:convergence2d}, where we display the energy and \(L^{2}\)-norm of the discretization error with respect to a reference solution.
Similar to the one-dimensional case above, the observed rates suggest that the convergence results for constant kernels can be generalized to the variable coefficient case.

\begin{figure}
  \centering
  \includegraphics[]{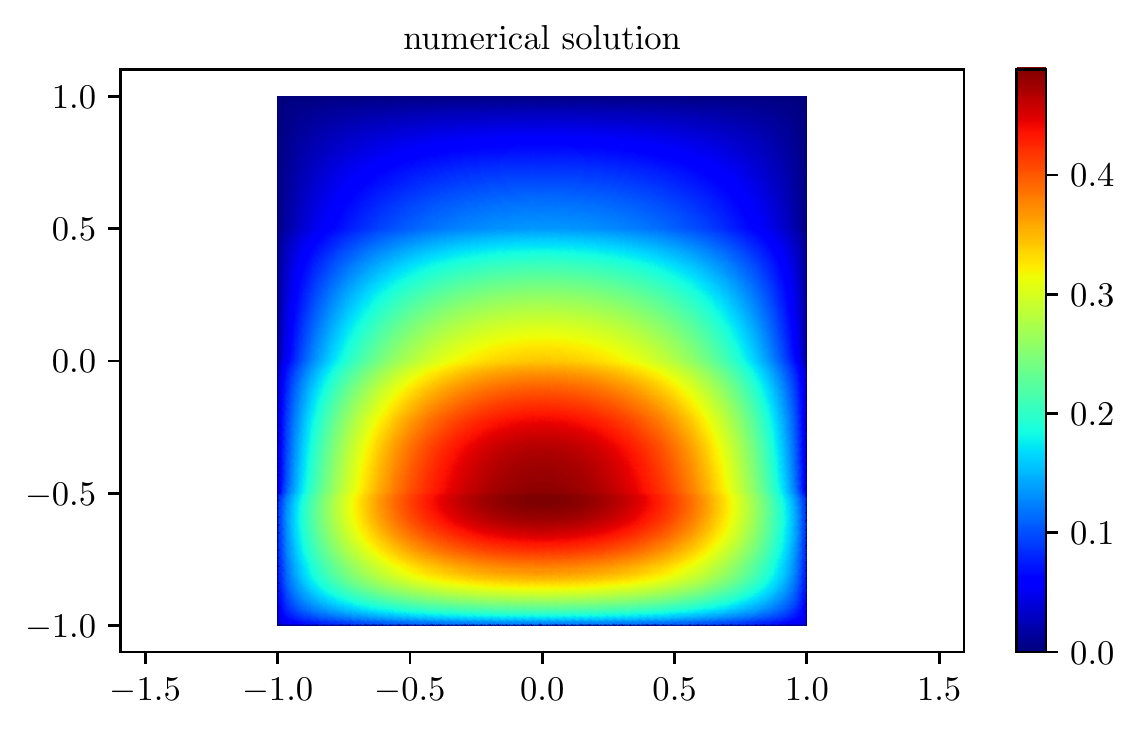}
  \caption{Solution for constant forcing, finite horizon and fractional order \eqref{eq:fracOrderLayers} with four layers.}
  \label{fig:layers2d}
\end{figure}

\begin{figure}
  \centering
  \includegraphics[]{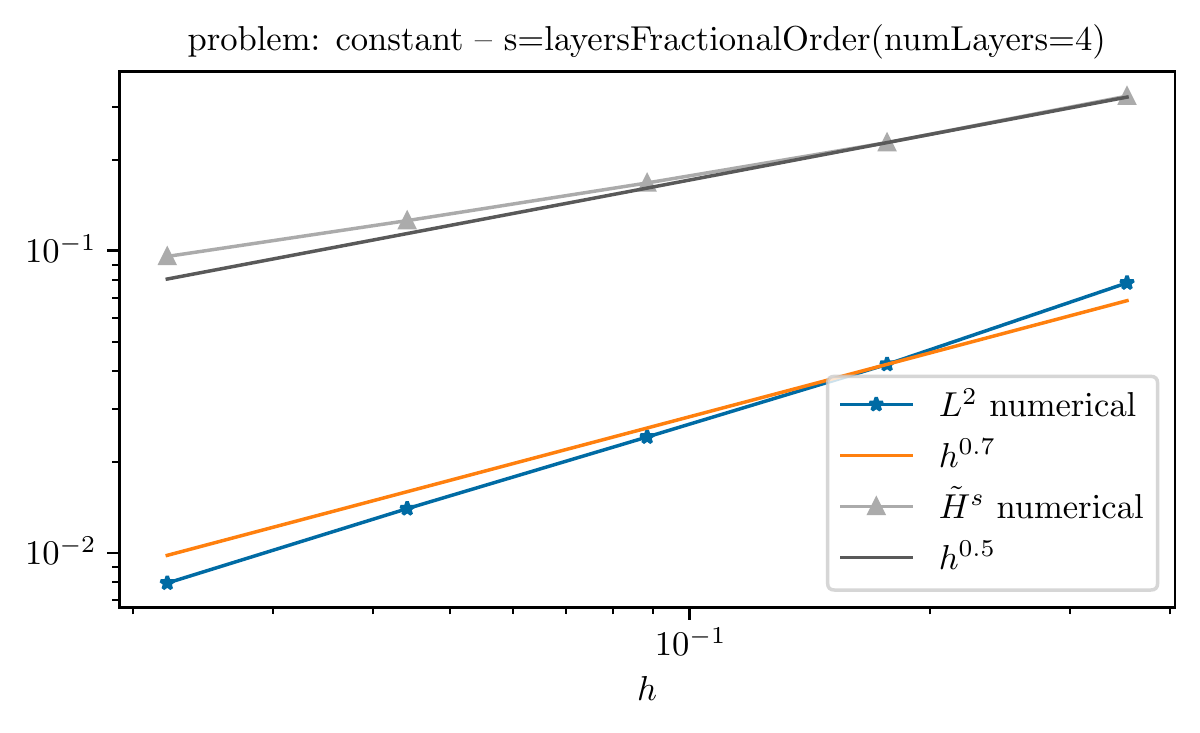}
  \caption{Convergence in \(L^{2}\) and energy norm for a two-dimensional example with four material layers.}
  \label{fig:convergence2d}
\end{figure}

\subsection{Examples of more complex configurations}\label{sec:island}

We consider a configuration that mimics the realistic setting of Figure \ref{fig:interface-domain-config}, left. Specifically, we consider the two-dimensional case of \(\Omega=B_{1}(\mathbf{0})\) and infinite horizon \(\delta=\infty\). We set \(\phi\equiv1\) and define the fractional order \(s\) by inclusions, i.e.
\begin{align}\label{eq:fracOrderIslands}
  s(\xb,\yb)&=
              \begin{cases}
                0.25 & \text{if } \abs{\xb_{i}}, \abs{\yb_{i}} \in [0.1, 0.6], i=1,2, \\
                0.75 & \text{if } \abs{\xb_{i}}, \abs{\yb_{i}} \not\in [0.1, 0.6], i=1,2, \\
                \alpha & \text{else},
              \end{cases}
\end{align}
for \(\alpha\in\{0.25,0.5,0.75\}\).
We enforce the homogeneous Dirichlet condition on \(\omgi=\mathbb{R}^{2}\setminus \Omega\) using Gauss's theorem, see~\cite{AinsworthGlusa2018}. Note that this shortcut can be applied because, for a given \(\xb\in\Omega\), \(s(\xb,\yb)\) and \(s(\yb,\xb)\) are constant for all \(\yb\in\omgi\).
Results are reported in Figure \ref{fig:island-phi1}; there, we observe that the solution behavior is less diffusive in the inclusion regions than in the surrounding domain.
Also, as in the one-dimensional test cases, we observe that the choice of the fractional order for the interactions impacts the entity of the jump of the derivative at the interface between the inclusions and the rest of the domain.
This fact clearly emphasizes the need of identification methods for variable-order fractional models.
In particular, in this case, it would be sufficient to simply learn the interaction order $\alpha$ to better predict the behavior of the solution across the interface.

\begin{figure}
  \centering
  \includegraphics[]{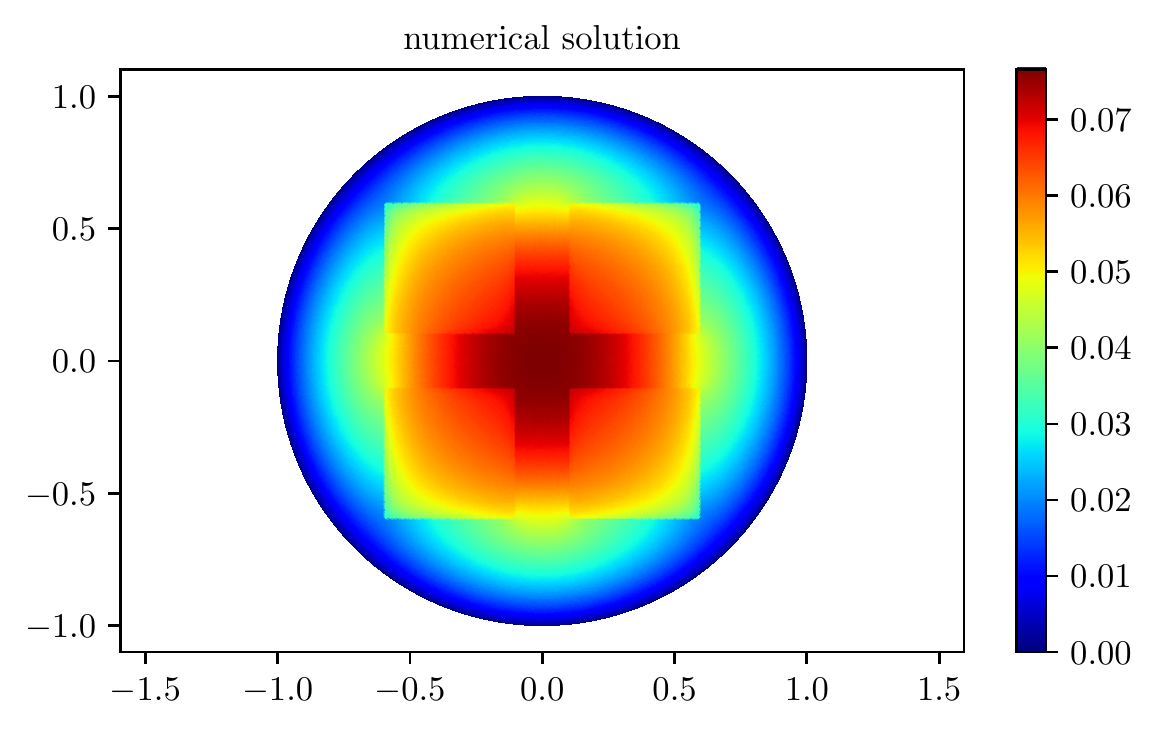}
  \includegraphics[]{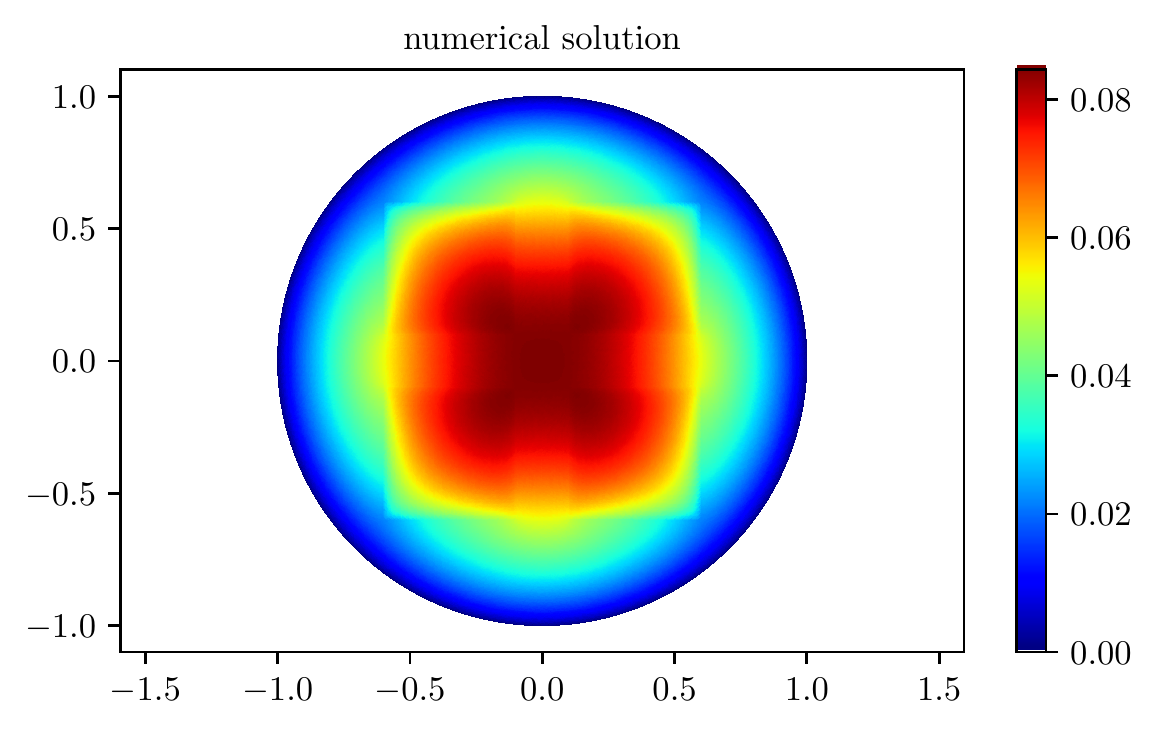}
  \includegraphics[]{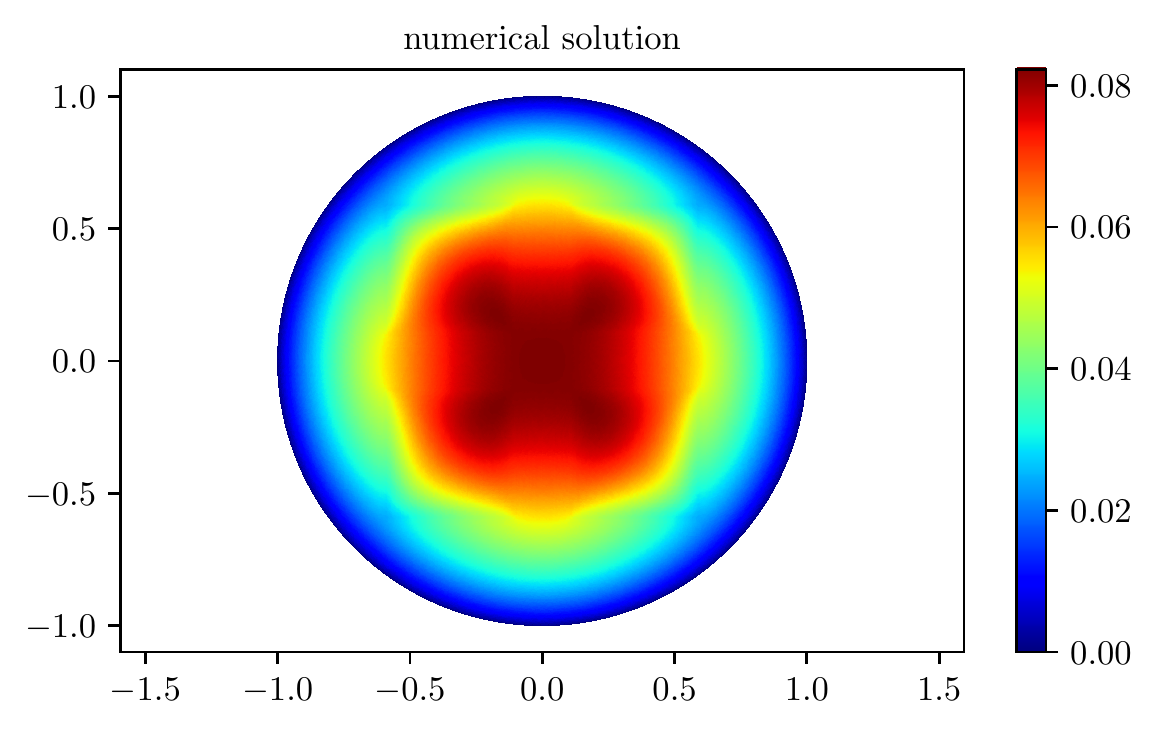}
  \caption{
    Solution for constant forcing, infinite horizon and fractional order \eqref{eq:fracOrderIslands} with inclusions.
    From top to bottom: \(\alpha\in\{0.25,0.5,0.75\}\).
  }
  \label{fig:island-phi1}
\end{figure}

\section{Concluding remarks}\label{sec:conclusion}
This work introduces a new variable-order fractional model that, compared with state-of-the-art alternatives, features improved variability.
The latter is obtained by allowing the fractional order $s$ to be a function of both $\xb$ and $\yb$, hence allowing an agile treatment of changes in the properties of the underlying physical system.
This new model finds practical use in presence of material heterogeneities and, in particular, in presence of abrupt changes in material properties, i.e. of physical interfaces.

Our theoretical results guarantee the feasibility of the proposed operator by showing that, under certain conditions on the model parameters, the associated diffusion problem is well-posed.
In fact, we point out that by proving coercivity of the time-independent problem, we automatically guarantee well-posedness of the associated parabolic problem.
On the other hand, our computational tests not only do they illustrate the improved descriptive power of the proposed model, but they also show that solutions to doubly-variable order equation converge with the same rates as the ones corresponding to the minimum value, $\underline s$, of the fractional order.
Furthermore, our implementation of the finite-element matrix assembly proves to be efficient in presence of piecewise constant definitions of $s(\xb,\yb)$.

Clearly, the problem of finding the fractional order profile that best fits a physical system remains open and raises many challenges.
However, our proposed model and the corresponding numerical tests show that the identification problem can be reduced to determining a handful of parameters, namely, the fractional orders in each subregion and the interaction orders.
Furthermore, the dependence on both $\xb$ and $\yb$ allows us to consider symmetric kernels, simplifying significantly the finite element implementation and allowing for more efficient assembly algorithms.

\section*{Acknowledgments}

\subsection*{Financial disclosure}
MD and CG are supported by Sandia National Laboratories (SNL), SNL is a multimission laboratory managed and operated by National Technology and Engineering Solutions of Sandia, LLC., a wholly owned subsidiary of Honeywell International, Inc., for the U.S. Department of Energys National Nuclear Security Administration contract number DE-NA0003525. This work was supported through the Sandia National Laboratories Laboratory-directed Research and Development (LDRD) program, project 218318 and by the U.S. Department of Energy, Office of Advanced Scientific Computing Research under the Collaboratory on Mathematics and Physics-Informed Learning Machines for Multiscale and Multiphysics Problems (PhILMs) project. This paper describes objective technical results and analysis. Any subjective views or opinions that might be expressed in the paper do not necessarily represent the views of the U.S. Department of Energy or the United States Government. SAND Number: SAND2021-0842 O.

\subsection*{Conflict of interest}
The authors declare no potential conflict of interests.

\section*{Supporting information}
None reported.

\nocite{*}
\bibliographystyle{plain}
\bibliography{references}%
\end{document}